\pgfplotsset{compat=newest}
\newtheorem{theorem}{Theorem}[section]
\newtheorem{corollary}{Corollary}[section]
\newtheorem{lemma}{Lemma}[section]
\newcommand{\N}{\mathbb{N}}
\newcommand{\weakc}{\rightharpoonup}
\newcommand{\R}{\mathbb{R}}
\newcommand{\C}{\mathbb{C}}
\newcommand{\pdr}{\partial_r}
\newcommand{\e}{\text{ev}}
\newcommand{\p}{\text{pr}}
\begin{document}

\begin{flushleft}
\Large 
\noindent{\bf \Large Analysis of the transmission eigenvalue problem for biharmonic scattering considering penetrable scatterers}
\end{flushleft}

{\bf  \large Rafael Ceja Ayala}\\
\indent {\small School of Mathematical and Statistical Sciences, Arizona State University, Tempe, AZ 85287 }\\
\indent {\small Email:  \texttt{rcejaaya@asu.edu} }\\

{\bf  \large Isaac Harris}\\
\indent {\small Department of Mathematics, Purdue University, West Lafayette, IN 47907 }\\
\indent {\small Email: \texttt{harri814@purdue.edu} }\\

{\bf  \large Andreas Kleefeld}\\
\indent {\small Forschungszentrum J\"{u}lich GmbH, J\"{u}lich Supercomputing Centre, } \\
\indent {\small Wilhelm-Johnen-Stra{\ss}e, 52425 J\"{u}lich, Germany}\\
\indent {\small University of Applied Sciences Aachen, Faculty of Medical Engineering and } \\
\indent {\small Technomathematics, Heinrich-Mu\ss{}mann-Str. 1, 52428 J\"{u}lich, Germany}\\
\indent {\small Email: \texttt{a.kleefeld@fz-juelich.de}}\\

\begin{abstract}
\noindent In this paper, we provide an analytical study of the transmission eigenvalue problem in the context of biharmonic scattering with a penetrable obstacle. We will assume that the underlying physical model is given by an infinite elastic two--dimensional Kirchhoff--Love plate in $\mathbb{R}^2$, where the plate's thickness is small relative to the wavelength of the incident wave. In previous studies, transmission eigenvalues have been studied for acoustic scattering, whereas in this case, we consider biharmonic scattering. We prove the existence and discreteness of the transmission eigenvalues as well as study the dependence on the refractive index. We are able to prove the monotonicity of the first transmission eigenvalue with respect to the refractive index. Lastly, we provide numerical experiments to validate the theoretical work.
\end{abstract}

\section{Introduction}
In {this} paper, we study a transmission eigenvalue problem for biharmonic scattering with a penetrable medium. Wave scattering by penetrable obstacles plays an important role in many imaging and sensing applications, ranging from medical diagnostics and non-destructive testing in engineering to geophysical exploration. In these settings, the obstacle does not reflect all the incoming energy; instead, part of the wave is transmitted into the medium, leading to more complex interactions and interesting theory. A central object in the analysis of such problems are transmission eigenvalues, spectral values for non-trivial incident waves that produce no scattering. These values depend on both the material properties and the geometry of the obstacle, and carry useful information for inverse problems. Transmission eigenvalues have been a very active field of investigation in the area of inverse scattering. This is due to the fact that these eigenvalues can be recovered from the far-field data (see for example \cite{far field data,anisotropic}) and can be used to determine defects in a material \cite{TA,cavities,electro,mypaper1,te-cbc2,te-cbc3}. Although most of the classical theory on transmission eigenvalues focuses on second-order equations like the Helmholtz equation, many physical systems, especially those involving elasticity, thin plates, or surface effects, are better modeled using biharmonic operators. These arise naturally in the study of biharmonic wave scattering in elastic materials and plates, and have applications in modeling thin films, coated structures, or elastic shells. Yet, the corresponding transmission eigenvalue problem for biharmonic waves has received far less attention. In general, in acoustic scattering one can prove that the  transmission eigenvalues depend monotonically on the physical parameters, which implies that they can be used as a target signature for non-destructive testing. We show that the same happens in the problem, where one has biharmonic scattering. We will show that by having information or knowledge of the transmission eigenvalues, one can retrieve information about the material properties of the scattering object. Another reason one studies these eigenvalue problems, {is their non-linear and non-self-adjoint nature}. This makes them mathematically challenging to study. We refer to \cite{BS} for a survey on the study of transmission eigenvalue problems and to \cite{clamedTE} for a study of transmission eigenvalue problem for clamped boundary conditions.

Deriving accurate numerical algorithms to compute the transmission eigenvalues is an active field of study see for example \cite{spectraltev1,spectraltev2,fem-te,CMS,GP,numerical,mfs-te,eig-FEM-book} but most of the numerical algorithms are done with acoustic waves. In this paper, we consider transmission eigenvalues when we have biharmonic scattering and we will recover them numerically using separation of variables and boundary integral equations. 

The remainder of the paper is organized as follows. We will derive the transmission eigenvalue problem under consideration from the direct scattering problem in Section \ref{theproblem}. Next, in Section \ref{discrete} we prove that the transmission eigenvalues form a discrete set in the complex plane as well as provide an example via separation of variables to prove that this is a non-selfadjoint eigenvalue problem. Then in Section \ref{exist}, we prove the existence of infinitely many real transmission eigenvalues as well as study the dependence on the refractive index. Numerical examples, using separation of variables are given in Section \ref{numerics} to validate the analysis presented in the earlier sections. Further, numerical results are given using boundary integral equations. A short summary and an outlook is given in Section \ref{summary}.

\section{Formulation of the problem}\label{theproblem}

This work is concerned with the spectral analysis of a transmission eigenvalue problem arising from the scattering of biharmonic waves by penetrable obstacles in two dimensions. Specifically, we aim to connect the framework in transmission eigenvalues to the direct scattering problem governed by the biharmonic operator. Let $D \subset \mathbb{R}^2$ be a bounded domain with analytic boundary $\partial D$, representing a penetrable inclusion within an otherwise homogeneous medium. The scatterer is given by the support of the contrast, where we assume that the `refractive index' $n \in L^{\infty}(\R^2)$ such that $\overline{D}=$sup$(n-1)$. Now, we assume that the scattering medium is illuminated by a incident plane wave of the form $u^i(x, \hat{y}) = \text{e}^{\text{i}k x \cdot \hat{y}}$, where $k > 0$ denotes the wavenumber, $x \in \mathbb{R}^2$ is the spatial variable, and $\hat{y} \in \mathbb{S}^1$ is the direction of propagation.

The incident field $u^i$ satisfies the free-space biharmonic `Helmholtz' equation
\begin{align}
\Delta^2 u^i-k^4u^i=0 \quad \text{ in } \quad \mathbb{R}^{2}. \label{direct1}
\end{align}
The presence of the scatterer alters the propagation of the wave, giving rise to a scattered field $u^s(x,\hat{y})\in H^2_{\text{loc}}(\mathbb{R}^{2})$ that satisfies 
\begin{align}
\Delta^2 u^s-k^4 n(x)u^s=-k^4 \big(1-n(x)\big) u^i \quad & \text{in} \hspace{.2cm}  \mathbb{R}^2 . \label{direct2}
\end{align}
The transmission conditions across the boundary $\partial D$ enforce continuity conditions on the total field denoted $u=u^i+u^s$ which are given by 
\begin{align}
[\![ u ]\!] \big|_{\partial D} = [\![ \partial_\nu u ]\!] \big|_{\partial D} = [\![ \Delta u ]\!] \big|_{\partial D} = [\![ \partial_\nu \Delta u ]\!] \big|_{\partial D} = 0 \quad \text{on} \quad \partial D.
\label{eq:trans_conditions}
\end{align}
Here, $\nu$ denotes the outward unit normal vector on the boundary $\partial D$, and the notation 
$$ [\![\phi ]\!] \big|_{\partial D} := (\phi_{+} - \phi_{-}) \big|_{\partial D} $$ 
where `+' represents the trace taken from $\mathbb{R}^2 \setminus \overline{D}$ and the `$-$' represents the trace taken from $D$ for a given function $\phi$. Now, to ensure physical admissibility, the scattered field and its Laplacian are required to satisfy the Sommerfeld radiation condition at infinity
\begin{equation} \label{direct4}
\lim_{r \to \infty} \sqrt{r}(\pdr{u^s} - \text{i} k u^s) = 0,  \quad \text{ and } \quad \lim_{r \to \infty} \sqrt{r}(\pdr{\Delta u^s} - \text{i} k \Delta u^s) = 0
\end{equation}
for $r :=|x|$, where we assume that the limits are satisfied uniformly for all directions on the unit sphere $\hat{x}:=x/|x|\in \mathbb{S}^1$ (see \cite{novelbi,Inverserandom} for details).

A useful decomposition of the scattered field outside of $D$ is obtained by exploiting the operator identity $(\Delta^2 - k^4) = (\Delta - k^2)(\Delta + k^2)$ and introducing two auxiliary fields defined by the scattered field. To this end, just as in \cite{biwellposed,optiforinversebiha,novelbi} we define the auxiliary fields
$$ u_\p := -\frac{1}{2k^2} (\Delta - k^2) u^s \quad \text{and} \quad u_\e := \frac{1}{2k^2} (\Delta + k^2) u^s,$$
where the scattered field $u^s = u_\p +u_\e$. We have that $u_\p$ (propagating) and $u_\e$ (evanescent) satisfy the Helmholtz equation (wave number=$k$) and modified Helmholtz equation (wave number=$\text{i}k$) in $\mathbb{R}^2\setminus\overline{D},$ respectively. Notice that the auxiliary fields satisfy 
\begin{align}
     (\Delta +k^2)u_\p=0\quad \text{ and } \quad (\Delta-k^2)u_\e=0\quad\text{in}\hspace{.2cm} \mathbb{R}^2\setminus\overline{D},
     \label{hemholtzmodified} 
\end{align}
 along with the Sommerfeld radiation condition for $u_\p$ and $u_\e$, i.e.,
\begin{equation}
\lim_{r \to \infty} \sqrt{r}(\partial_r{u_\p} - \text{i} k u_\p) = 0,\quad \text{and}\quad \lim_{r \to \infty} \sqrt{r}(\partial_r{u_\e} - \text{i} k u_\e) = 0. 
\label{radiupue}
\end{equation}

The transmission eigenvalue problem arises when one asks if there exists an incident wave $u^i \neq 0$ such that the measured Cauchy data $(u^s,\partial_\nu u^s)=(0,0)$ on some curve $\Gamma=\partial \Omega$ such that $\overline{D} \subset \Omega$.  It has been shown in \cite{novelbi} that the exterior clamped obstacle problem 
$$\Delta^2 u^s-k^4 u^s=0 \quad \text{in $\mathbb{R}^2 \setminus \overline{\Omega}$} \quad \text{and} \quad (u^s,\partial_\nu u^s) \big|_{\Gamma}=(0,0) $$
along with the radiation condition \eqref{direct4} only admits a trivial solution $u^s=0$ in $\mathbb{R}^2 \setminus \overline{\Omega}$. This would imply that $u_\p = 0$ and $u_\e = 0$ 
in $\mathbb{R}^2 \setminus \overline{D}$ by unique continuation. Clearly this would imply that the scattered field $u^s=0$ in $\mathbb{R}^2 \setminus \overline{D}$. Now, due to the fact that $u^i$ is a smooth solution to \eqref{direct1}
we have the following homogeneous boundary value problem. Find the pair $(w, v) \in H^2(D) \times H^2(D)$ satisfying:
\begin{align}
\Delta^2 w - k^4 n(x) w = 0, \quad \Delta^2 v - k^4 v = 0 \quad & \text{in } D, \label{TEprob1} \\
w = v, \quad \partial_\nu w = \partial_\nu v, \quad \Delta w = \Delta v, \quad \partial_\nu \Delta w = \partial_\nu \Delta v \quad & \text{on } \partial D. \label{TEprob2}
\end{align}
This is given by the direct scattering problem, where $w = u$ the total field and $v = u^i$ the incident field. This problem arises in studying reconstruction methods like the linear sampling method \cite{IST of TA} for recovering the scatterer from the measured Cauchy data on some curve $\Gamma$.

To facilitate the analysis, we assume that $\partial D$ is of class $\mathcal{C}^4$, and $n$ is real--valued with either $n > 1$ or $0 < n < 1$ almost everywhere in $D$. The transmission eigenvalues are defined as the values of $k$ for which the above system \eqref{TEprob1}--\eqref{TEprob2} has nontrivial solutions. Furthermore, we define the Sobolev space 
\begin{align}\label{funcspace}
H^4_*(D)=\{u\in H^4(D): u= \partial_{\nu}u=\Delta u=\partial_{\nu}\Delta u=0\quad \text{on}\quad \partial D\}\subset H^4(D)
\end{align}
that is equipped with the standard $H^4(D)$ norm. Now, we introduce $u := w - v \in H^4_*(D)$ and obtain an equivalent formulation: 
\begin{align*}
\Delta^2 u - k^4 n u = -k^4(1 - n) v \quad & \text{in } D, \\
u = \partial_\nu u = \Delta u = \partial_\nu \Delta u = 0 \quad & \text{on } \partial D.
\end{align*}
This leads to the following operator equation: find $k \in \C$ and $u \in H^4_*(D)$ such that
\begin{align}
    (\Delta^2-k^4)\frac{1}{n-1}(\Delta^2u-k^4nu)=0\quad &\text{in}\quad D, \label{ueigprob1}\\
      u=\partial_\nu u=\Delta u=\partial_{\nu}\Delta u=0 \quad &\text{on}\quad \partial D.\label{ueigprob2}
\end{align}
By appealing to Green's 2nd identity, we have that the equivalent weak formulation reads: find $k \in \C$ and $u \in H^4_*(D) \setminus \{0\}$ such that
\begin{equation}
    \int_D \frac{1}{n-1}(\Delta^2u-k^4nu)(\Delta^2\overline{\varphi}-k^4\overline{\varphi})\text{d}x=0\quad \text{for all} \quad \varphi\in H^4_*(D).\label{variform}
\end{equation}
Notice, that the original eigenfunctions $w$ and $v$ can be reconstructed from $u$ via:
$$v = -\frac{1}{k^4 (1 - n)}(\Delta^2 u - k^4 n u), \quad w = -\frac{1}{k^4 (1 - n)}(\Delta^2 u - k^4 u).$$
By definition the eigenfunctions $(w,v)$ defined above from $u$ are in $H^2(D)$ for smooth $n$. 

Note that a transmission eigenvalue problem for biharmonic scattering by a clamped planar region has been studied in \cite{clamedTE}, where they used the theoretical results in \cite{te-cbc} to prove existence of transmission eigenvalues. Similar, we will employ the theory developed in \cite{te-cbc} to analyze \eqref{ueigprob1}--\eqref{ueigprob2}. The focus of this paper is to analyze this transmission eigenvalue problem and the inverse spectral theory in the context of penetrable biharmonic scattering obstacles.

Throughout the paper, we will use an equivalence of norm for the Sobolev space $H^4_*(D)$. It is well--known that by appealing to elliptic regularity (see, e.g., Evans \cite{evans}) that in the Sobolev space $H^2_0(D)$ we have that there exist constants \( C_1, C_2 > 0 \) such that
\begin{align}
C_1 \|\Delta \phi\|_{L^2(D)} \leq \|\phi\|_{H^2(D)} \leq C_2 \|\Delta \phi\|_{L^2(D)}. \label{h20-inequ}
\end{align}
We will prove a similar result for the Sobolev space $H^4_*(D)$ considered here. 

\begin{theorem}\label{norm-equ}
Let \( D \subset \mathbb{R}^2 \) be a bounded domain with \( C^4 \)–smooth boundary \( \partial D \), then there exists constants \( C_1, C_2 > 0 \) such that
$$C_1 \|\Delta^2 u\|_{L^2(D)} \leq \|u\|_{H^4(D)} \leq C_2 \|\Delta^2 u\|_{L^2(D)} \quad \text{for all} \quad u \in H^4_*(D),$$
where the Sobolev space $H^4_*(D)$ is as defined in \eqref{funcspace}.
\end{theorem}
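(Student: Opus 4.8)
The plan is to prove the two inequalities separately, with the lower bound being immediate and the upper bound carrying all the content. For the lower bound, since $\Delta^2 u = \partial_{x_1}^4 u + 2\,\partial_{x_1}^2\partial_{x_2}^2 u + \partial_{x_2}^4 u$ is a sum of fourth-order partial derivatives of $u$, we have $\|\Delta^2 u\|_{L^2(D)} \le C\|u\|_{H^4(D)}$ trivially, which gives the left inequality after setting $C_1 = 1/C$; no boundary conditions are needed here.

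For the upper bound $\|u\|_{H^4(D)} \le C_2 \|\Delta^2 u\|_{L^2(D)}$, the idea is to factor the biharmonic operator into two successive Dirichlet problems for the Laplacian. I would introduce the auxiliary function $v := \Delta u$. Since $u \in H^4_*(D)$, we have $v \in H^2(D)$, and the boundary conditions in \eqref{funcspace} give $v = \Delta u = 0$ on $\partial D$, so $v \in H^2(D)\cap H^1_0(D)$; likewise $u \in H^2(D) \cap H^1_0(D)$ from $u = 0$ on $\partial D$. By construction $v$ solves $\Delta v = \Delta^2 u$ in $D$.

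I would then invoke two a priori estimates for the Dirichlet Laplacian, both standard consequences of global elliptic regularity on a smooth domain (see Evans \cite{evans}). The first is the $H^2$ estimate $\|\phi\|_{H^2(D)} \le C\|\Delta \phi\|_{L^2(D)}$, valid for all $\phi \in H^2(D)\cap H^1_0(D)$, which is the $H^1_0$-analogue of \eqref{h20-inequ}: the lower-order term in the raw estimate $\|\phi\|_{H^2} \le C(\|\Delta\phi\|_{L^2}+\|\phi\|_{L^2})$ is absorbed using the invertibility of the Dirichlet Laplacian, equivalently the Poincar\'e inequality, since $0$ is not a Dirichlet eigenvalue. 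Applying this to $\phi = v$ yields $\|\Delta u\|_{H^2(D)} = \|v\|_{H^2(D)} \le C\|\Delta^2 u\|_{L^2(D)}$. The second is the $H^4$ shift estimate $\|u\|_{H^4(D)} \le C\|\Delta u\|_{H^2(D)}$ for $u \in H^4(D) \cap H^1_0(D)$; this is where the $C^4$ smoothness of $\partial D$ is used, as the regularity theorem requires the boundary to be two orders smoother than in the base $H^2$ case, and here too the lower-order term is absorbed by the preceding $H^2$ estimate. Chaining the two gives $\|u\|_{H^4(D)} \le C\|\Delta u\|_{H^2(D)} \le C_2\|\Delta^2 u\|_{L^2(D)}$.

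The step I expect to require the most care, rather than a genuine obstacle, is the justification that the lower-order $L^2$ terms appearing in the raw elliptic estimates can be dropped, which rests on the invertibility of the Dirichlet Laplacian (or a short integration-by-parts argument combined with Poincar\'e), together with checking that $C^4$ boundary regularity is precisely what the $H^4$ elliptic estimate demands. It is worth noting that only the two weakest of the four boundary conditions defining $H^4_*(D)$, namely $u = \Delta u = 0$ on $\partial D$, are actually used, so the equivalence in fact holds on a larger space.
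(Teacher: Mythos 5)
Your proof is correct and follows essentially the same route as the paper: both set $v=\Delta u$ (the paper's $\phi$) and chain two elliptic regularity estimates for the Dirichlet Laplacian, first $\|u\|_{H^4(D)}\le C\|\Delta u\|_{H^2(D)}$ and then $\|\Delta u\|_{H^2(D)}\le C\|\Delta^2 u\|_{L^2(D)}$. Your additional observations --- that the lower-order terms are absorbed via invertibility of the Dirichlet Laplacian, and that only the conditions $u=\Delta u=0$ on $\partial D$ are actually used --- are accurate refinements of the paper's more terse argument.
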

\begin{proof}
To prove the claim, for any $u \in H^4_*(D)$ we let  $\phi = \Delta u$,  where we have that $\phi \in H^2_0(D)$. Therefore, by well--posedness of the Poisson problem with zero Dirichlet data and elliptic regularity we have that 
$$\| u \|_{H^4(D)} \leq C \| \phi \|_{H^2(D)} \quad \text{ which implies that } \quad \| u \|_{H^4(D)} \leq C \| \Delta \phi \|_{L^2(D)}$$
since $\phi \in H^2_0(D)$. Now, by definition we have that 
$$\| u \|_{H^4(D)} \leq C \| \Delta \phi \|_{L^2(D)}  = C \| \Delta^2 u \|_{L^2(D)}.$$
For the other inequality, it is clear that $\| \Delta^2 u \|_{L^2(D)} \leq  \| u \|_{H^4(D)}$, proving the claim.
\end{proof}

With this, we are now ready to analyze the transmission eigenvalue problem \eqref{ueigprob1}--\eqref{ueigprob2}. In the following section, we will prove the discreteness of the transmission eigenvalues in the complex plane by studying the variational formulation \eqref{variform}.

\section{Discreteness of Transmission Eigenvalues}\label{discrete}
In this section, we study the discreteness of the transmission eigenvalues for the biharmonic scattering problem. In general, it is known that for the analogous acoustic scattering problem that sampling methods such as the factorization method \cite{fmconductbc,kirschbook} do not provide valid reconstructions of $D$, if the wave number $k$ is a transmission eigenvalue. Similar methods to show discreteness have been applied for acoustic second-order problems (see \cite{TE2CB} for example), but here we consider the case associated with biharmonic scattering. 

We begin by assuming that the refractive index $n(x)\in L^{\infty}(D)$. Recall that we have the variational formulation  
$$\int_D \frac{1}{n-1}(\Delta^2u-k^4nu)(\Delta^2\overline{\varphi}-k^4\overline{\varphi})\text{d}x=0\quad \text{for all} \quad \varphi\in H^4_*(D).$$
By expanding the above express we have that 
$$\int_D \frac{1}{n-1} \Delta^2 u \Delta^2 \overline{\varphi} -\frac{k^4}{n-1}\Delta^2 u \overline{\varphi} - \frac{k^4n }{n-1} u \Delta^2 \overline{\varphi}+\frac{k^8 n }{n-1} u \overline{\varphi} \, \text{d}x=0$$
for all $\varphi\in H^4_*(D)$. From this, the first thing we observe is that $k=0$ is not a transmission eigenvalue provided that either 
\begin{equation}\label{assume-n}
n-1\geq n_{\text{min}}-1 >0 \quad \text{ or } \quad  1-n \geq 1-n_{\text{max}} >0 \quad \text{a.e. in the scatterer $D$}.
\end{equation}
Here $n_{\text{min}}$ and $n_{\text{max}}$ are the infimum  and supremum of $n(x)$ over $x \in D$. 
This is due to Theorem \ref{norm-equ}, which implies that 
$$\int_D \frac{1}{n-1} \Delta^2 u \Delta^2 \overline{\varphi} \, \text{d}x $$
is coercive in $H^4_*(D)$, by our assumptions in \eqref{assume-n} on $n$. Now, inspired by the analysis in \cite{fem-te} we define the auxiliary function $\psi \in H^2_0(D)$ that satisfies 
$$\Delta ^2 \psi =k^4\frac{n}{n-1}u \quad \text{in}\,\, D  \quad \text{for any}\quad u \in H^4_*(D).$$
It is clear that $\psi \in H^2_0(D)$ defined above exists and depends continuously on $u \in H^4_*(D)$.

We can rewrite transmission eigenvalue problem as the problem of finding a nontrivial pair of functions $(u,\psi)\in H^4_*(D)\times H^2_0(D)$ and a constant $k \in \mathbb{C}$ such that 
\begin{align}
\int_D \frac{1}{n-1} \Delta^2 u \Delta^2 \overline{\varphi}\, \text{d}x &= -k^4 \int_D \frac{1}{n-1}\Delta^2 u \overline{\varphi} + \frac{n }{n-1} u \Delta^2 \overline{\varphi} + \Delta \psi \Delta \overline{\varphi} \, \text{d}x \label{new-system1}\\
\int_D \Delta \psi \Delta \overline{\rho} \, \text{d}x &= k^4  \int_D \frac{n}{n-1} u \overline{\rho}  \, \text{d}x \quad \text{for all} \quad (\varphi,\rho)\in H^4_*(D)\times H^2_0(D).  \label{new-system2}
\end{align}
It is clear that \eqref{new-system1}--\eqref{new-system2} is equivalent to our original transmission eigenvalue problem. To this end, similar to \cite{fem-te} we can use \eqref{new-system1}--\eqref{new-system2} to analyze our transmission eigenvalue problem. In particular we will use this to prove the discreteness of the transmission eigenvalues. 
Furthermore, we have that adding or subtracting \eqref{new-system1}--\eqref{new-system2} gives the variational form 
\begin{align}\label{formabc}
A_{\pm}  \big( (u,\psi),(\varphi,\rho) \big) = -k^4 \left[ b\big( (u,\psi),(\varphi,\rho) \big) \mp c\big( (u,\psi),(\varphi,\rho) \big) \right]. 
\end{align}
Here, we define the sesquilinear form $A_{\pm}\big(  \cdot \, ,\cdot \big)$ given by  
\begin{align}
A_{\pm}\big( (u,\psi),(\varphi,\rho) \big) := \int_D \frac{1}{n-1} \Delta^2 u \Delta^2 \overline{\varphi}\, \text{d}x \pm \int_D \Delta \psi \Delta \overline{\rho} \, \text{d}x,  \label{forma}
\end{align}
where the sesquilinear form $b\big(  \cdot \, ,\cdot \big)$ and $c\big(  \cdot \, ,\cdot \big)$ are defined by
\begin{align}
b\big( (u,\psi),(\varphi,\rho) \big) &:= \int_D \frac{1}{n-1}\Delta^2 u \overline{\varphi} + \frac{n }{n-1} u \Delta^2 \overline{\varphi} +\Delta \psi \Delta \overline{\varphi} \, \text{d}x \label{formb}
\end{align}
and
\begin{align}
c\big( (u,\psi),(\varphi,\rho) \big) := \int_D \frac{n}{n-1} u \overline{\rho}  \, \text{d}x. \label{formc}
\end{align}
respectively, from the right-hand side in \eqref{new-system1}--\eqref{new-system2}. Notice, that this formulation of the transmission eigenvalue problem turns \eqref{variform} into a linear eigenvalue problem for $k^4$.

\begin{lemma}\label{forma-coercive}
The sesquilinear form $A_{\pm}\big(  \cdot \, ,\cdot \big)$ defined by \eqref{forma} acting on the product space $H^4_*(D)\times H^2_0(D)$ is coercive provided that \eqref{assume-n} is satisfied. 
\end{lemma}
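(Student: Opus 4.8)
The plan is to test the sesquilinear form on the diagonal, setting $(\varphi,\rho)=(u,\psi)$, and to show that $\big|A_{\pm}\big((u,\psi),(u,\psi)\big)\big|$ dominates the squared product norm $\|u\|_{H^4(D)}^2+\|\psi\|_{H^2(D)}^2$. A direct evaluation gives
$$A_{\pm}\big( (u,\psi),(u,\psi) \big) = \int_D \frac{1}{n-1}\, |\Delta^2 u|^2 \, \text{d}x \pm \int_D |\Delta \psi|^2 \, \text{d}x,$$
which is real since $n$ is real-valued. The key structural point is that the sign $\pm$ must be matched to the hypothesis \eqref{assume-n}: I would use $A_+$ when $n>1$ and $A_-$ when $0<n<1$, so that the two quadratic terms carry the same sign and no cancellation can occur.

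In each case the coefficient $\tfrac{1}{n-1}$ is uniformly bounded away from zero because $n\in L^\infty(D)$ and \eqref{assume-n} holds: when $n>1$ one has $\tfrac{1}{n-1}\ge \tfrac{1}{n_{\text{max}}-1}>0$, and when $0<n<1$ one has $\tfrac{1}{1-n}\ge \tfrac{1}{1-n_{\text{min}}}>0$. Consequently the first term is bounded below in absolute value by a positive multiple of $\|\Delta^2 u\|_{L^2(D)}^2$, and Theorem \ref{norm-equ} upgrades this to control of the full norm,
$$\left| \int_D \frac{1}{n-1}\, |\Delta^2 u|^2 \, \text{d}x \right| \ge c_1 \|\Delta^2 u\|_{L^2(D)}^2 \ge \frac{c_1}{C_2^2}\, \|u\|_{H^4(D)}^2,$$
for a constant $c_1>0$ depending only on $n_{\text{min}}$ or $n_{\text{max}}$.

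For the second term I would invoke the classical elliptic estimate \eqref{h20-inequ} on $H^2_0(D)$, which bounds $\|\Delta \psi\|_{L^2(D)}^2$ below by a positive multiple of $\|\psi\|_{H^2(D)}^2$. Combining the two lower bounds, with the sign chosen so that they reinforce rather than cancel, yields
$$\big| A_{\pm}\big( (u,\psi),(u,\psi) \big) \big| \ge \alpha \left( \|u\|_{H^4(D)}^2 + \|\psi\|_{H^2(D)}^2 \right)$$
for some $\alpha>0$, which is precisely coercivity on $H^4_*(D)\times H^2_0(D)$.

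I do not anticipate a genuine obstacle: once the form is evaluated on the diagonal the argument is essentially bookkeeping. The two points demanding care are correctly pairing the sign $\pm$ with the sign of $n-1$ so that the quadratic terms do not cancel, and extracting the uniform positive lower bound on $\tfrac{1}{n-1}$ from the $L^\infty$ bound on $n$ together with \eqref{assume-n}. The substantive input is Theorem \ref{norm-equ}, which converts control of $\|\Delta^2 u\|_{L^2(D)}$ into control of $\|u\|_{H^4(D)}$ on the space $H^4_*(D)$.
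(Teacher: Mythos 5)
Your proposal is correct and follows essentially the same route as the paper, which simply invokes the assumption \eqref{assume-n} together with Theorem \ref{norm-equ} and the estimate \eqref{h20-inequ}; you have merely written out the details (diagonal evaluation, matching the sign $\pm$ to the sign of $n-1$, and the uniform lower bound on $|1/(n-1)|$) that the paper leaves implicit.
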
 
\begin{proof} 
This is clear from the assumptions on $n$ in \eqref{assume-n} that $A_{\pm}\big(  \cdot \, ,\cdot \big)$ is coercive by appealing to Theorem \ref{norm-equ} and equation \eqref{h20-inequ}. 
\end{proof}

The above result implies that either $A_{\pm}\big(  \cdot \, ,\cdot \big)$ is an inner--product on $H^4_*(D)\times H^2_0(D)$. We now consider the sesquilinear forms $b\big(  \cdot \, ,\cdot \big)$ defined by \eqref{formb} and $c\big(  \cdot \, ,\cdot \big)$ defined by  \eqref{formc}. In order to prove discreteness of the transmission eigenvalues, we prove that both sesquilinear forms are compact. For $c\big(  \cdot \, ,\cdot \big)$ this is a simple consequence of the Rellich--Kondrachov Embedding Theorem \cite{evans}. For the sesquilinear form $b\big(  \cdot \, ,\cdot \big)$ however this fact is harder to obtain. First, we see that by some simple calculations we have that 
\begin{align}
b\big( (u,\psi),(\varphi,\rho) \big) &= \int_D \frac{1}{n-1}\Delta^2 u \overline{\varphi} + \frac{n }{n-1} u \Delta^2 \overline{\varphi} + \Delta \psi \Delta \overline{\varphi} \, \text{d}x \nonumber \\ 
&=\int_D \frac{1}{n-1} \left[ \Delta^2 u\overline{\varphi} +  u \Delta^2 \overline{\varphi}  \right] + u \Delta^2 \overline{\varphi}  + \Delta \psi \Delta \overline{\varphi} \, \text{d}x \nonumber \\ 
&=\int_D \frac{1}{n-1} \left[ \Delta^2 u\overline{\varphi} +  u \Delta^2 \overline{\varphi}  \right] + u \Delta^2 \overline{\varphi}  + \psi \Delta^2 \overline{\varphi} \, \text{d}x, \label{formb-alt}
\end{align}
where we have used Green's second identity to obtain the last line. With this, we can now prove the compactness.

\begin{lemma}\label{forms-compact}
The sesquilinear forms $b\big(  \cdot \, ,\cdot \big)$ defined by \eqref{formb} and $c\big(  \cdot \, ,\cdot \big)$ defined by  \eqref{formc} acting on the product space $H^4_*(D)\times H^2_0(D)$ are compact provided that \eqref{assume-n} is satisfied. 
\end{lemma}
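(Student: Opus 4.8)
The plan is to verify compactness through the standard sequential criterion: a bounded sesquilinear form $s(\cdot,\cdot)$ on a Hilbert space is compact precisely when $s\big((u_j,\psi_j),(\varphi_j,\rho_j)\big) \to s\big((u,\psi),(\varphi,\rho)\big)$ whenever $(u_j,\psi_j)\weakc(u,\psi)$ and $(\varphi_j,\rho_j)\weakc(\varphi,\rho)$ weakly in $H^4_*(D)\times H^2_0(D)$; this is equivalent to compactness of the operator that represents the form with respect to the inner product $A_\pm(\cdot,\cdot)$, which is genuinely an inner product by Lemma \ref{forma-coercive}. So I would fix two such weakly convergent sequences and record their consequences once and for all. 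By the Rellich--Kondrachov theorem the embeddings $H^4(D)\hookrightarrow L^2(D)$ and $H^2(D)\hookrightarrow L^2(D)$ are compact, hence $u_j\to u$ and $\varphi_j\to\varphi$ strongly in $L^2(D)$, and $\psi_j\to\psi$, $\rho_j\to\rho$ strongly in $L^2(D)$. On the other hand, since $\Delta^2:H^4(D)\to L^2(D)$ is bounded and linear, $\Delta^2 u_j\weakc\Delta^2 u$ and $\Delta^2\varphi_j\weakc\Delta^2\varphi$ \emph{only} weakly in $L^2(D)$.

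For $c(\cdot,\cdot)$ the argument is immediate and is the Rellich--Kondrachov consequence the authors anticipate. Since $\frac{n}{n-1}\in L^\infty(D)$ by \eqref{assume-n} and $u_j\to u$, $\rho_j\to\rho$ strongly in $L^2(D)$, the product $u_j\overline{\rho_j}\to u\overline{\rho}$ in $L^1(D)$, so $c\big((u_j,\psi_j),(\varphi_j,\rho_j)\big)=\int_D\frac{n}{n-1}u_j\overline{\rho_j}\,\mathrm{d}x$ converges to $\int_D\frac{n}{n-1}u\overline{\rho}\,\mathrm{d}x$.

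For $b(\cdot,\cdot)$ I would work from the rewritten expression \eqref{formb-alt}, in which Green's second identity has already been used to move both Laplacians in the term $\int_D\Delta\psi\,\Delta\overline{\varphi}\,\mathrm{d}x$ onto $\varphi$, producing $\int_D\psi\,\Delta^2\overline{\varphi}\,\mathrm{d}x$ with the boundary contributions vanishing because $\psi\in H^2_0(D)$ has zero Dirichlet and Neumann traces. This rewriting is the \emph{crux} of the matter, and it is where I expect the only genuine difficulty to lie: in the original form \eqref{formb} the term $\int_D\Delta\psi_j\,\Delta\overline{\varphi_j}\,\mathrm{d}x$ pairs $\Delta\psi_j$ against $\Delta\varphi_j$, each of which converges only weakly in $L^2(D)$, and a product of two merely weakly convergent $L^2$-sequences need not pass to the limit. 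Transferring both derivatives onto $\varphi$ converts this into $\int_D\psi_j\,\Delta^2\overline{\varphi_j}\,\mathrm{d}x$, a pairing of the strongly convergent $\psi_j$ with the weakly convergent $\Delta^2\overline{\varphi_j}$, which does pass to the limit.

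Once \eqref{formb-alt} is in hand, every one of the four terms is a product of one $L^2$-strongly convergent factor with one $L^2$-weakly convergent factor, multiplied by the bounded coefficient $\frac{1}{n-1}$ (or $1$). Concretely, $\int_D\frac{1}{n-1}\Delta^2 u_j\,\overline{\varphi_j}\,\mathrm{d}x$ pairs the weak $\Delta^2 u_j$ with the strong $\varphi_j$; the terms $\int_D\frac{1}{n-1}u_j\,\Delta^2\overline{\varphi_j}\,\mathrm{d}x$ and $\int_D u_j\,\Delta^2\overline{\varphi_j}\,\mathrm{d}x$ pair the strong $u_j$ with the weak $\Delta^2\overline{\varphi_j}$; and $\int_D\psi_j\,\Delta^2\overline{\varphi_j}\,\mathrm{d}x$ pairs the strong $\psi_j$ with the weak $\Delta^2\overline{\varphi_j}$. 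Since a strongly convergent sequence paired against a weakly convergent sequence converges in the $L^2$ inner product, and $\frac{1}{n-1}\in L^\infty(D)$ can be absorbed into the strong factor without destroying its strong convergence, each term converges to its counterpart evaluated at the weak limits. Summing the four limits yields $b\big((u_j,\psi_j),(\varphi_j,\rho_j)\big)\to b\big((u,\psi),(\varphi,\rho)\big)$, which establishes compactness of $b(\cdot,\cdot)$ and completes the proof.
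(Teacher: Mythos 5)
Your proof is correct, but it follows a genuinely different route from the paper's. You use the sequential criterion that a bounded form is compact iff it converges along \emph{pairs} of weakly convergent sequences, and then dispatch each term of \eqref{formb-alt} as a pairing of one $L^2$-strongly convergent factor (supplied by Rellich--Kondrachov) with one $L^2$-weakly convergent factor. The paper instead shows $\sup_{\|(\varphi,\rho)\|=1}\left|b\big((u_j,\psi_j),(\varphi,\rho)\big)\right|\to 0$ for $(u_j,\psi_j)\weakc(0,0)$, i.e.\ it proves directly that the representing operator maps weakly null sequences to norm-null sequences; since the test pair $(\varphi,\rho)$ then ranges over the whole unit ball rather than along a fixed weakly convergent sequence, the term $\int_D\frac{1}{n-1}\Delta^2u_j\,\overline{\varphi}\,\mathrm{d}x$ cannot be handled by a weak--strong pairing, and the paper introduces the Riesz-representation operator $\mathbb{F}$ (compact because it factors through $L^2$) and its adjoint $\mathbb{F}^*$ to control it. Your approach buys a shorter, more elementary argument with no auxiliary operators; the paper's buys a statement that is uniform over the test functions and slots directly into the norm computation for $T$ in Theorem \ref{TEdiscrete}. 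Both hinge on the identity \eqref{formb-alt}. One small inaccuracy in your commentary: you assert that in the original form \eqref{formb} the factor $\Delta\overline{\varphi_j}$ converges only weakly in $L^2(D)$; in fact $\varphi_j\weakc\varphi$ in $H^4(D)$ gives $\Delta\varphi_j\weakc\Delta\varphi$ in $H^2(D)$ and hence $\Delta\varphi_j\to\Delta\varphi$ strongly in $L^2(D)$, so that term would already pass to the limit without Green's identity under your criterion (the rewriting is, however, genuinely needed for the paper's sup-criterion, where $\rho$ and $\varphi$ are not drawn from a weakly convergent sequence). This does not affect the validity of your proof.
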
 
\begin{proof}
Here, we will start with the simpler case for $c\big(  \cdot \, ,\cdot \big)$ defined by  \eqref{formc}. Therefore, we have that by the fact that $n \in L^{\infty}(D)$ and \eqref{assume-n} we have that 
$$ \left|c\big( (u,\psi),(\varphi,\rho) \big)\right| \leq C  \|u\|_{L^2(D)} \| \rho \|_{H^2(D)}.$$ 
Now, consider a sequence  $(u_j ,\psi_j) \weakc (0,0)$ as $j \to \infty$ in $H^4_*(D)\times H^2_0(D)$. This would imply that $u_j \to 0$ as $j \to \infty$ in $L^2(D)$ since $H^4(D)$ is compactly embedded in $L^2(D)$. From the above estimate we have that 
$$\sup_{\|(\varphi,\rho) \|_{H^4_*(D)\times H^2_0(D)}=1} \left| c\big( (u_j,\psi_j),(\varphi,\rho) \big) \right| \leq C \|u_j\|_{L^2(D)} \longrightarrow 0 \quad \text{as}\quad  j \to \infty$$ 
proving compactness.

Now, we consider the sesquilinear forms $b\big(  \cdot \, ,\cdot \big)$ defined by \eqref{formb}.  To this end, notice that 
$$ \left| \int_D u \Delta^2 \overline{\varphi}  + \psi \Delta^2 \overline{\varphi} \, \text{d}x \right| \leq C \Big\{ \|u\|_{L^2(D)} + \| \psi \|_{L^2(D)} \Big\}   \|\Delta^2 {\varphi}\|_{L^2(D)}. $$
This gives an estimate for the last two terms in \eqref{formb-alt}. To estimate the first two terms by appealing to the Riesz representation theorem there is a bounded linear operator $\mathbb{F}$ mapping $H^4_*(D)\times H^2_0(D)$ into itself such that 
$$\big( \mathbb{F}(u,\psi),(\varphi,\rho) \big)_{H^4_*(D)\times H^2_0(D)} := \int_D \frac{1}{n-1}  u \Delta^2 \overline{\varphi} \, \text{d}x$$ 
which implies that its adjoint satisfies
$$\big( \mathbb{F}^*(u,\psi),(\varphi,\rho) \big)_{H^4_*(D)\times H^2_0(D)} := \int_D \frac{1}{n-1}  \Delta^2 u\overline{\varphi} \, \text{d}x.$$ 
It is clear that 
$${\displaystyle \big\| \mathbb{F}(u,\psi)\big\|_{H^4_*(D)\times H^2_0(D)} \leq C \| u \|_{L^2(D)}}$$
which implies that $\mathbb{F}$ is compact by the compact embedding of $H^4_*(D)$ into $L^2(D)$. Also notice that since $\mathbb{F}$ is compact we can conclude that $\mathbb{F}^*$ is also compact. With this we have that 
\begin{align*}
\sup_{\|(\varphi,\rho) \|_{H^4_*(D)\times H^2_0(D)}=1}  \left| b\big( (u,\psi) ,(\varphi,\rho) \big) \right| &\leq C \Big\{  \|u\|_{L^2(D)} + \| \psi \|_{L^2(D)}  +\\
& \big\| \mathbb{F}(u,\psi)\big\|_{H^4_*(D)\times H^2_0(D)} + \big\| \mathbb{F}^*(u,\psi)\big\|_{H^4_*(D)\times H^2_0(D)} \Big\}.
\end{align*}
We can again, consider a sequence  $(u_j ,\psi_j) \weakc (0,0)$ as $j \to \infty$ in $H^4_*(D)\times H^2_0(D)$. This would imply that $u_j \to 0$ and $\psi_j \to 0$ as $j \to \infty$ in $L^2(D)$. We also have that 
$$\big\| \mathbb{F}(u_j,\psi_j) \big\|_{H^4_*(D)\times H^2_0(D)} + \big\| \mathbb{F}^*(u_j,\psi_j) \big\|_{H^4_*(D)\times H^2_0(D)} \longrightarrow 0 \quad \text{as}\quad  j \to \infty$$
since $\mathbb{F}$ and $\mathbb{F}^*$ is compact. This implies that 
$$\sup_{\|(\varphi,\rho) \|_{H^4_*(D)\times H^2_0(D)}=1}  \left| b\big( (u_j,\psi_j) ,(\varphi,\rho) \big) \right|\longrightarrow 0 \quad \text{as}\quad  j \to \infty$$ 
proving that $b\big(  \cdot \, ,\cdot \big)$ is compact. 
\end{proof}

Now, we can proceed with proving the discreteness of the transmission eigenvalues. Recall, that we have that $k$ is a transmission eigenvalue if and only if there is a non--trivial solution to equation \eqref{formabc}. With this we consider the corresponding source problem, i.e. where we let $k^4 u =f$ and $k^4 \psi =g$ in \eqref{formabc}. Therefore, we consider the associated source for any $(f,g) \in H^4_*(D)\times H^2_0(D)$ that is given by
 \begin{align}\label{sourceprob}
A_{\pm}  \big( (u,\psi),(\varphi,\rho) \big) = -\left[ b\big( (f,g),(\varphi,\rho) \big) \mp c\big( (f,g),(\varphi,\rho) \big) \right] 
\end{align}  
which is well--posed for $(u,\psi) \in H^4_*(D)\times H^2_0(D)$ by Lemma \ref{forma-coercive} and appealing to the Lax--Milgram Lemma. So we can define the bounded linear operator 
$$T:H^4_*(D)\times H^2_0(D) \to H^4_*(D)\times H^2_0(D) \quad \text{such that} \quad T(f,g) = (u,\psi), $$
where $(u,\psi)$ satisfy \eqref{sourceprob} which implies that 
$$A_{\pm}  \big( T(f,g),(\varphi,\rho) \big) = -\left[ b\big( (f,g),(\varphi,\rho) \big) \mp c\big( (f,g),(\varphi,\rho) \big) \right].$$ 
With this, we have that $k$ is a transmission eigenvalue if and only if $\lambda=1/k^4$ is an eigenvalue for the operator $T$ since 
$$T(u,\psi) = \lambda (u,\psi) \implies \lambda A_{\pm} \big( (u,\psi) ,(\varphi,\rho) \big) = -\left[ b\big( (u,\psi),(\varphi,\rho) \big) \mp c\big( (f,g),(\varphi,\rho) \big) \right].$$  
Using the operator $T$ we can now prove the discreteness of the transmission eigenvalues. 

\begin{theorem}\label{TEdiscrete}
Assume that $n \in L^{\infty}(D)$ satisfies \eqref{assume-n}, then the set of transmission eigenvalues is discrete with no finite accumulation point.
\end{theorem}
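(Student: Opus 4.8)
The plan is to exploit the reformulation already in place: $k$ is a transmission eigenvalue if and only if $\lambda=1/k^4$ is an eigenvalue of the bounded linear operator $T$ on the Hilbert space $X:=H^4_*(D)\times H^2_0(D)$. Once I show that $T$ is compact, the Riesz--Schauder spectral theory for compact operators forces the spectrum of $T$ to be a discrete set whose only possible accumulation point is $\lambda=0$; translating this back through $\lambda=1/k^4$ then yields discreteness of the transmission eigenvalues with infinity as the only accumulation point.

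First I would record that $T$ is well defined and bounded: by Lemma \ref{forma-coercive} the form $A_\pm(\cdot,\cdot)$ is coercive, so the source problem \eqref{sourceprob} is uniquely solvable for every datum $(f,g)\in X$ by the Lax--Milgram lemma, with continuous dependence on $(f,g)$. In particular, coercivity yields a constant $\alpha>0$ for which
$$\alpha\,\|(u,\psi)\|_X \;\le\; \sup_{\|(\varphi,\rho)\|_X=1}\big|A_\pm\big((u,\psi),(\varphi,\rho)\big)\big|,$$
obtained by testing with $(u,\psi)/\|(u,\psi)\|_X$.

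Next, the heart of the argument is the compactness of $T$. I would take a sequence $(f_j,g_j)\weakc(0,0)$ weakly in $X$ and set $(u_j,\psi_j):=T(f_j,g_j)$. Combining the coercivity estimate above with the defining identity for $T$ gives
$$\alpha\,\|(u_j,\psi_j)\|_X \;\le\; \sup_{\|(\varphi,\rho)\|_X=1}\big|\, b\big((f_j,g_j),(\varphi,\rho)\big)\mp c\big((f_j,g_j),(\varphi,\rho)\big)\,\big|,$$
and the right-hand side tends to $0$ as $j\to\infty$ by exactly the supremum estimates established in the proof of Lemma \ref{forms-compact}. Hence $T$ sends weakly convergent sequences to norm-convergent sequences, i.e. $T$ is compact. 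Equivalently, one may write $T=-(B\mp C)$, where $B,C$ are the bounded operators representing $b$ and $c$ through the $A_\pm$-inner product, and observe that $B,C$ are compact by Lemma \ref{forms-compact}.

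Finally I would invoke the spectral theorem for compact operators on a Hilbert space: the nonzero spectrum of $T$ consists of at most countably many isolated eigenvalues of finite multiplicity, accumulating only at $0$. Since every eigenvalue obeys $|\lambda|\le\|T\|$, any transmission eigenvalue satisfies $|k|\ge\|T\|^{-1/4}$, so no transmission eigenvalues cluster near $k=0$, consistent with the earlier observation that $k=0$ is not a transmission eigenvalue. Moreover, a finite accumulation point $k_\ast\neq 0$ of transmission eigenvalues would force the eigenvalues $1/k^4$ to accumulate at $1/k_\ast^4\neq0$, contradicting the isolation of the nonzero eigenvalues of $T$. Therefore the set of transmission eigenvalues is discrete with no finite accumulation point. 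The only genuinely delicate point is the compactness of $T$, which is already delivered by Lemma \ref{forms-compact}; the remaining steps amount to a routine transfer of the compact-operator spectral theory from the $\lambda$-variable to the $k$-variable.
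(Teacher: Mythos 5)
Your proposal is correct and follows essentially the same route as the paper: reduce to the operator $T$, use the coercivity of $A_{\pm}$ to bound $\|T(f_j,g_j)\|$ by the supremum of $b \mp c$ over unit test pairs, invoke Lemma \ref{forms-compact} to conclude compactness of $T$, and then apply the Riesz--Schauder theory. The only additions beyond the paper's argument are the explicit lower bound $|k|\ge\|T\|^{-1/4}$ and the spelled-out transfer from the $\lambda$-variable to the $k$-variable, both of which are fine.
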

\begin{proof}
Notice that $k$ is a transmission eigenvalue implies that $1/k^4$ is in the spectrum of the operator $T$. To prove the claim, we first show that $T$ is compact. To this end, by the definition of the sesquilinear form $A_{\pm}\big(  \cdot \, ,\cdot \big)$ and Lemma \ref{forma-coercive} we have that $A_{\pm}\big(  \cdot \, ,\cdot \big)$ is an inner--product on $H^4_*(D)\times H^2_0(D)$. 
This implies that 
\begin{align*}
\|T(f,g)\|_{H^4_*(D)\times H^2_0(D)} &= \sup_{\|(\varphi,\rho) \|_{H^4_*(D)\times H^2_0(D)}=1} \left|A_{\pm}\big(T(f,g) ,(\varphi,\rho)\big)\right| \\
&=\sup_{\|(\varphi,\rho) \|_{H^4_*(D)\times H^2_0(D)}=1} \left| b\big( (f,g) ,(\varphi,\rho)\big) \mp c\big( (f,g) ,(\varphi,\rho)\big) \right|.
\end{align*}
With this, we see that for any sequence $(f_j ,g_j) \weakc (0,0)$ as $j \to \infty$ in $H^4_*(D)\times H^2_0(D)$ we have that 
$$ \|T(f_j ,g_j)\|_{H^4_*(D)\times H^2_0(D)} 
=\sup_{\|(\varphi,\rho) \|_{H^4_*(D)\times H^2_0(D)}=1} \left| b\big( (f_j ,g_j) ,(\varphi,\rho)\big) \mp c\big( (f_j ,g_j) ,(\varphi,\rho)\big) \right|$$ 
and since the sesquilinear forms $b\big(  \cdot \, ,\cdot \big)$ and $c\big(  \cdot \, ,\cdot \big)$ are compact by Lemma \ref{forms-compact} we have that 
$$\|T(f_j ,g_j)\|_{H^4_*(D)\times H^2_0(D)} \longrightarrow 0 \quad \text{as}\quad  j \to \infty$$
proving that $T$ is compact. Now, since $T$ is compact we have that its spectrum is discrete with only $0$ as the only accumulation point, proving the claim. 
\end{proof}

From Theorem \ref{TEdiscrete} we are able to prove discreteness of the transmission eigenvalue by considering an equivalent eigenvalue problem for a compact operator. Notice, that since the sesquilinear form $b\big(  \cdot \, ,\cdot \big)$ is not self--adjoint, we can not conclude existence of the transmission eigenvalues with this argument. This could also imply the existence of complex eigenvalues since the associated operator may not be self--adjoint.


\section{Existence of Transmission Eigenvalues}\label{exist}
\noindent In this section, we establish the existence of transmission eigenvalues for the biharmonic scattering problem previously presented. Our objective is to demonstrate that real transmission eigenvalues exist, following an approach inspired by \cite{te-cbc}. We work with the eigenvalue problem \eqref{ueigprob1}--\eqref{ueigprob2} and its variational formulation \eqref{variform}:  
\begin{equation*}
    \int_D \frac{1}{n-1}\big(\Delta^2u-k^4nu\big)\big(\Delta^2\overline{\varphi}-k^4\overline{\varphi}\big)\,\text{d}x=0
    \quad \forall\varphi\in H^4_*(D).
\end{equation*}
We will leverage the following two conditions on our operators to show the existence of transmission eigenvalues for our scattering problem (see Theorem 2.3 in \cite{cakonihaddar} for more details). 
\begin{theorem}\label{cakonihaddar}
    We first defined a continuous mapping $k \mapsto A_k$ from $(0,\infty)$ into the set of self--adjoint, positive definite, bounded operators on a Hilbert space $U$, and note that $B$ is a self--adjoint, non--negative, compact operator on $U$. Assume there exist $k_0,k_1>0$ such that  
\begin{enumerate}\label{cakonihaddar}
    \item $A_{k_0}-k_0^4B$ is positive definite,  
    \item $A_{k_1}-k_1^4B$ is non--positive on an $m$--dimensional subspace of $U$.  
 \end{enumerate}
 Then, for each $j=1,\dots,m$, there exists $k \in [k_0,k_1]$ such that $\lambda_j(k)=k^4$, where $\lambda_j(k)$ is the $j$-th transmission eigenvalue of $A_k-\lambda B$.
\end{theorem}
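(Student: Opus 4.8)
The plan is to reduce the theorem to an intermediate value argument applied to a family of generalized eigenvalue curves built from the Courant--Fischer min--max principle. First I would fix $k>0$ and study the problem $A_k u = \lambda B u$. Since $A_k$ is self--adjoint and positive definite it is boundedly invertible, so this is equivalent to asking that $1/\lambda$ be an eigenvalue of $A_k^{-1}B$. This operator is compact (as $B$ is compact and $A_k^{-1}$ bounded) and self--adjoint and non--negative with respect to the inner product $(A_k\,\cdot\,,\cdot)$; hence its positive eigenvalues form a sequence $\mu_1(k)\ge\mu_2(k)\ge\cdots>0$ decreasing to zero, and I would define $\lambda_j(k):=1/\mu_j(k)$, an increasing sequence accumulating only at $+\infty$. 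Equivalently, each curve admits the representation
$$\lambda_j(k) = \min_{\substack{W \subset U \\ \dim W = j}} \; \max_{u \in W \setminus \{0\}} \frac{(A_k u, u)}{(B u, u)},$$
with the convention that the quotient equals $+\infty$ on $\ker B$, so that the minimizing subspaces lie in $\overline{\mathrm{Range}(B)}$. A transmission eigenvalue is then exactly a value of $k$ at which one of these curves meets the diagonal, $\lambda_j(k)=k^4$, so I would set $f_j(k):=\lambda_j(k)-k^4$ and look for its zeros on $[k_0,k_1]$.

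The second step is to read off the two endpoint signs of $f_j$ from the hypotheses. Condition~1 states that $A_{k_0}-k_0^4 B$ is positive definite, giving $(A_{k_0}u,u)>k_0^4(Bu,u)$ for every $u\ne 0$; since the min--max above is attained at genuine eigenvectors this forces $\lambda_1(k_0)>k_0^4$, and as the $\lambda_j$ increase in $j$ we obtain $f_j(k_0)>0$ for all $j$. Condition~2 supplies an $m$--dimensional subspace $V_m$ on which $(A_{k_1}u,u)\le k_1^4(Bu,u)$; note that $(Bu,u)>0$ for $u\in V_m\setminus\{0\}$, for otherwise $(A_{k_1}u,u)\le 0$ would contradict positive definiteness. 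Using $V_m$ as a competitor in the min--max for $\lambda_m(k_1)$ yields
$$\lambda_m(k_1)\le \max_{u\in V_m\setminus\{0\}} \frac{(A_{k_1}u,u)}{(Bu,u)} \le k_1^4,$$
and monotonicity in $j$ then gives $\lambda_j(k_1)\le k_1^4$, i.e. $f_j(k_1)\le 0$, for each $j=1,\dots,m$.

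Finally, for each such $j$ I would invoke the intermediate value theorem: since $f_j$ is continuous with $f_j(k_0)>0$ and $f_j(k_1)\le 0$, there exists $k\in(k_0,k_1]$ with $f_j(k)=0$, and this $k$ is the desired transmission eigenvalue with $\lambda_j(k)=k^4$. I expect the main obstacle to be justifying the continuity of $k\mapsto\lambda_j(k)$, on which the whole argument rests. The plan here is to deduce it from the assumed continuity of $k\mapsto A_k$ together with the min--max representation: the $j$--th min--max value is a continuous (indeed locally Lipschitz) function of the operator in the operator norm, so continuity of the family transfers to each eigenvalue curve even where multiplicities change or curves cross. The reduction to $A_k^{-1}B$ carried out in the first step also handles the one delicate point caused by $B$ being merely non--negative rather than strictly positive, since the relevant spectrum lives on $\overline{\mathrm{Range}(B)}=(\ker B)^{\perp}$, where $(Bu,u)>0$ for $u\ne 0$ and the Rayleigh quotient is finite.
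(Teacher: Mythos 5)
Your proposal is essentially correct, but note that the paper does not prove this statement at all: it is quoted verbatim as Theorem~2.3 of Cakoni--Haddar (the reference \cite{cakonihaddar}), so there is no in-paper proof to compare against. Your argument --- reduce to the generalized eigenvalue curves $\lambda_j(k)$ via the Courant--Fischer min--max principle, read off the endpoint signs of $f_j(k)=\lambda_j(k)-k^4$ from hypotheses~1 and~2, and apply the intermediate value theorem --- is precisely the standard proof given in that reference, so you have in effect reconstructed the cited argument rather than found a different route. Two details deserve slightly more care than you give them. First, for the continuity of $k\mapsto\lambda_j(k)$ it is cleaner to work with the symmetrized compact operator $A_k^{-1/2}BA_k^{-1/2}$ (which has the same nonzero spectrum as $A_k^{-1}B$ and is self-adjoint in the original inner product): Weyl's inequality then gives $|\mu_j(k)-\mu_j(k')|\le\|A_k^{-1/2}BA_k^{-1/2}-A_{k'}^{-1/2}BA_{k'}^{-1/2}\|$, whereas the raw Rayleigh quotient $(A_ku,u)/(Bu,u)$ is not uniformly controlled in $k$ because the denominator can be arbitrarily small relative to $\|u\|^2$; one also needs the positive definiteness of $A_k$ to be locally uniform in $k$ so that $A_k^{-1/2}$ varies continuously, which follows from norm continuity of $k \mapsto A_k$ together with coercivity at each point. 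Second, you correctly observe that $(Bu,u)>0$ on $V_m\setminus\{0\}$, which guarantees that $B$ has at least $m$ positive eigenvalues and hence that $\lambda_1(k),\dots,\lambda_m(k)$ are finite for every $k$; this point is needed for the curves to be defined on all of $[k_0,k_1]$ and is worth stating explicitly. With those two points tightened, the proof is complete.
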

We return to the variational form of \eqref{ueigprob1}--\eqref{ueigprob2}, and introduce three sesquilinear forms acting on  $H^4_*(D) \times H^4_*(D)$:  
\begin{align}
\mathcal{A}_k(u,\varphi)&=\int_D \frac{1}{n-1}(\Delta^2 u-k^4u)(\Delta^2 \overline{\varphi}-k^4\overline{\varphi})+k^8u\overline{\varphi}\,\text{d}x, \label{A}\\
\Tilde{\mathcal{A}}_k(u,\varphi)&=\int_D \frac{n}{1-n}(\Delta^2 u-k^4u)(\Delta^2 \overline{\varphi}-k^4\overline{\varphi})+\Delta^2 u\,\Delta^2 \overline{\varphi}\,\text{d}x, \label{Ati}
\end{align}
and
\begin{align}
\mathcal{B}(u,\varphi)&=\int_D \Delta u\,\Delta \overline{\varphi}\,\text{d}x. \label{B}
\end{align}

The eigenvalue problem can then be expressed in two equivalent ways, depending on $n$:  
\begin{equation}
    \mathcal{A}_k(u,\varphi)-k^4\mathcal{B}(u,\varphi)=0
    \quad \forall \varphi\in H^4_*(D), \qquad n>1,
\end{equation}
or
\begin{equation}
    \Tilde{\mathcal{A}}_k(u,\varphi)-k^4\mathcal{B}(u,\varphi)=0
    \quad \forall \varphi\in H^4_*(D), \qquad 0<n<1.
\end{equation}

Now, by the Riesz representation theorem, we associate bounded operators $\mathbb{A}_k, \Tilde{\mathbb{A}}_k,\mathbb{B}:H^4_*(D)\to H^4_*(D)$ to the sesqulinear forms stated above, where they are defined by  
$$(\mathbb{A}_k u, \varphi)_{H^4(D)}=\mathcal{A}_k(u,\varphi), \quad (\Tilde{\mathbb{A}}_k u, \varphi)_{H^4(D)}=\Tilde{\mathcal{A}}_k(u,\varphi), \quad (\mathbb{B} u, \varphi)_{H^4(D)} = \mathcal{B}(u,\varphi).$$

Because $n$ is real--valued, the sesquilinear forms \eqref{A}--\eqref{B} are Hermitian, hence the induced operators are self--adjoint. Moreover, the compact embedding $H^4(D)\hookrightarrow H^2(D)$ ensures that $\mathbb{B}$ is compact, and positivity follows from the standard well--posedness. We now switch our attention to the operators $\mathcal{A}(\cdot,\cdot)$ and $\Tilde{\mathcal{A}}(\cdot,\cdot)$.
\begin{theorem}
    The operators $\mathcal{A}(\cdot,\cdot)$ and $\Tilde{\mathcal{A}}(\cdot,\cdot)$ are coercive for the case $n>1$ and $0<n<1,$ respectively.
\end{theorem}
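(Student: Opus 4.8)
The plan is to derive coercivity directly from the diagonal of each form and then invoke Theorem \ref{norm-equ} to upgrade a lower bound in terms of $\|\Delta^2 u\|_{L^2(D)}$ into the required lower bound in the full $H^4(D)$ norm. Setting $\varphi=u$ and using that $n$ and $k$ are real, the two forms collapse to
\begin{align*}
\mathcal{A}_k(u,u) &= \int_D \frac{1}{n-1}\left|\Delta^2 u - k^4 u\right|^2 + k^8 |u|^2 \, \text{d}x, \\
\Tilde{\mathcal{A}}_k(u,u) &= \int_D \frac{n}{1-n}\left|\Delta^2 u - k^4 u\right|^2 + |\Delta^2 u|^2 \, \text{d}x.
\end{align*}
The case $0<n<1$ is then immediate: since $\tfrac{n}{1-n}>0$ the first integrand is non-negative, so $\Tilde{\mathcal{A}}_k(u,u)\geq \|\Delta^2 u\|_{L^2(D)}^2$, and Theorem \ref{norm-equ} converts this into $\Tilde{\mathcal{A}}_k(u,u)\geq C\|u\|_{H^4(D)}^2$ with $C$ independent of $k$.

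The case $n>1$ is where the real work lies, since now the form has no free $|\Delta^2 u|^2$ term to lean on. I would first bound the weight below by $\tfrac{1}{n-1}\geq \tfrac{1}{n_{\text{max}}-1}>0$, which holds by \eqref{assume-n} together with $n\in L^\infty(D)$. The obstacle is the indefinite cross term appearing in $|\Delta^2 u - k^4 u|^2 = |\Delta^2 u|^2 - 2k^4\,\text{Re}(\Delta^2 u\,\overline{u}) + k^8|u|^2$. I would control it with the Young-type inequality
$$\left|\Delta^2 u - k^4 u\right|^2 \;\geq\; (1-\ep)\,|\Delta^2 u|^2 + \left(1-\tfrac{1}{\ep}\right)k^8|u|^2, \qquad \ep\in(0,1),$$
and then absorb the resulting negative zeroth-order contribution into the $+k^8|u|^2$ term already present in $\mathcal{A}_k$. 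Collecting terms yields
$$\mathcal{A}_k(u,u) \;\geq\; \frac{1-\ep}{n_{\text{max}}-1}\,\|\Delta^2 u\|_{L^2(D)}^2 \;+\; k^8\left[\,1-\frac{1}{n_{\text{max}}-1}\Big(\tfrac{1}{\ep}-1\Big)\right]\|u\|_{L^2(D)}^2 .$$

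The key point, and the heart of the argument, is that since $n_{\text{max}}-1>0$ one may choose $\ep$ close enough to $1$ that $\tfrac{1-\ep}{\ep}\leq n_{\text{max}}-1$, which makes the bracketed coefficient non-negative; the $\|u\|_{L^2(D)}^2$ term is then simply discarded and Theorem \ref{norm-equ} turns the surviving bound into $\mathcal{A}_k(u,u)\geq C\|u\|_{H^4(D)}^2$. I expect this balancing of the indefinite cross term against the zeroth-order term to be the main obstacle; it is, in fact, precisely why the compensating $+k^8 u\overline{\varphi}$ term was built into the definition of $\mathcal{A}_k$ in \eqref{A}. Finally, I would note that the tuning of $\ep$ depends only on $n_{\text{max}}-1$ and not on the size of $k$, so the coercivity constant is uniform in $k$, which is exactly the continuity-of-$A_k$ ingredient required to apply Theorem \ref{cakonihaddar}.
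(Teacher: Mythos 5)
Your proof is correct and follows essentially the same route as the paper's: for $0<n<1$ you drop the non-negative weighted term, and for $n>1$ you bound the weight below by $\tfrac{1}{n_{\max}-1}$, expand the square, use Young's inequality on the cross term, and absorb the resulting negative zeroth-order piece into the built-in $k^8\|u\|^2_{L^2(D)}$ term before invoking Theorem \ref{norm-equ}. The only cosmetic difference is the parametrization of the Young splitting (your $\ep\in(0,1)$ versus the paper's $\epsilon\in(\alpha,\alpha+1)$ with $\epsilon=\alpha+1/2$), and your closing observation that the coercivity constant is uniform in $k$ matches the paper's conclusion.
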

\begin{proof}
    For $n>1$,  
\begin{align*}
    \mathcal{A}_k(u,u)&=\int_D \frac{1}{n-1}\big|\Delta^2 u-k^4u\big|^2 \,\text{d}x + k^8\|u\|^2_{L^2(D)}\\
    &\geq \alpha \|\Delta^2u -k^4u\|^2_{L^2(D)} +k^8\|u\|^2_{L^2(D)}, \qquad \alpha = \tfrac{1}{n_{\max}-1}.
\end{align*}
After expanding and applying Young’s inequality, one obtains  
\begin{align*}
    \mathcal{A}_k(u,u)&\geq \alpha \|\Delta^2u\|^2_{L^2(D)}-2k^4 \alpha \|u\|_{L^2(D)}\|\Delta^2u\|_{L^2(D)}+k^8(\alpha+1)\|u\|^2_{L^2(D)} \\
    &\geq \left(\alpha -{\alpha^2}/{\epsilon} \right)\|\Delta^2 u \|^2_{L^2(D)}+k^8(\alpha+1-\epsilon)\|u\|^2_{L^2(D)}
\end{align*}
for any $\alpha < \epsilon < \alpha+1$. If we pick $\epsilon = \alpha+1/2$ we have that 
$$\mathcal{A}_k(u,u)\geq \frac{\alpha}{1+2\alpha} \|\Delta^2u\|^2_{L^2(D)}$$
which implies coercivity since ${\alpha}/{(1+2\alpha)}>0$ and Theorem \ref{norm-equ}. 
For $\Tilde{\mathcal{A}}_k$ in the case $0<n<1,$ we have 
\begin{align*}
    \Tilde{\mathcal{A}}_k(u,u)&=\int_D \frac{n}{1-n}|\Delta^2 u-k^4u|^2 \, \text{d}x+\|\Delta^2u\|^2_{L^2(D)}\\
    &\geq \|\Delta^2 u \|^2_{L^2(D)}
\end{align*}
which again proves coercivity by Theorem \ref{norm-equ}. Therefore, in either case we have that $\mathcal{A}(\cdot,\cdot)$ for $n>1$ and $\Tilde{\mathcal{A}}(\cdot,\cdot)$ for $0<n<1$ there are constants independent of $k$ such that 
$$ \mathcal{A}_k(u,u)\geq C_1 \|u\|^2_{H^4(D)}\quad  \Tilde{\mathcal{A}}_k(u,u)\geq C_2 \|u\|^2_{H^4(D)}$$
where the constants $C_1$ and $C_2$ are positive for all $u\in H^4_*(D)$.
\end{proof}
From Theorem \eqref{cakonihaddar}, it becomes necessary to specify additional properties of the operators $\mathbb{A}_k$, $\Tilde{\mathbb{A}}_k$, and $\mathbb{B}$. The following theorem provides the required assumptions.
\begin{corollary}\label{operators}
    Suppose $n>1$ or $0<n<1$ almost everywhere in $D$. Then:  
    \begin{enumerate}
        \item $\mathbb{B}$ is compact, positive, and self--adjoint,  
        \item $\mathbb{A}_k$ is a coercive, self--adjoint operator when $n>1$,  
        \item $\Tilde{\mathbb{A}}_k$ is coercive and self--adjoint when $0<n<1$.  
    \end{enumerate}
    Consequently, the operators $\mathbb{A}_k-k^4\mathbb{B}$ and $\Tilde{\mathbb{A}}_k-k^4\mathbb{B}$ will satisfy the Fredholm property.
\end{corollary}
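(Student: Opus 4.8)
The plan is to verify each listed property in turn and then combine the coercivity of the leading operator with the compactness of $\mathbb{B}$ to obtain the Fredholm conclusion. Much of the work has already been carried out in the preceding results, so the proof is largely a matter of transcribing the sesquilinear-form statements into operator language via the Riesz identification and assembling them.

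First, for item (1) I would treat $\mathbb{B}$ directly. Self-adjointness is immediate, since $\mathcal{B}(u,\varphi)=\int_D \Delta u\,\Delta\overline{\varphi}\,\text{d}x$ satisfies $\mathcal{B}(u,\varphi)=\overline{\mathcal{B}(\varphi,u)}$, whence $(\mathbb{B}u,\varphi)_{H^4(D)}=(u,\mathbb{B}\varphi)_{H^4(D)}$. Positivity follows from $(\mathbb{B}u,u)_{H^4(D)}=\mathcal{B}(u,u)=\|\Delta u\|_{L^2(D)}^2\geq 0$, and strict positivity on $H^4_*(D)\setminus\{0\}$ is obtained from the inclusion $H^4_*(D)\subset H^2_0(D)$ together with \eqref{h20-inequ}, which gives $\|u\|_{H^2(D)}\leq C_2\|\Delta u\|_{L^2(D)}$ and hence forces $u=0$ whenever $\Delta u=0$. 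For compactness I would imitate the argument in Lemma \ref{forms-compact}: the bound $|\mathcal{B}(u,\varphi)|\leq \|\Delta u\|_{L^2(D)}\|\Delta\varphi\|_{L^2(D)}\leq C\|u\|_{H^2(D)}\|\varphi\|_{H^4(D)}$ shows that, for a sequence $u_j\weakc 0$ in $H^4_*(D)$, the compact embedding $H^4(D)\hookrightarrow H^2(D)$ yields $u_j\to 0$ in $H^2(D)$, so that $\|\mathbb{B}u_j\|_{H^4(D)}=\sup_{\|\varphi\|_{H^4(D)}=1}|\mathcal{B}(u_j,\varphi)|\to 0$.

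Second, for items (2) and (3), the self-adjointness of $\mathbb{A}_k$ and $\Tilde{\mathbb{A}}_k$ follows from the Hermitian symmetry of $\mathcal{A}_k$ and $\Tilde{\mathcal{A}}_k$, which holds because $n$ is real-valued, as already noted before the statement. Coercivity is precisely the content of the preceding theorem, which furnishes constants $C_1,C_2>0$, independent of $k$, with $\mathcal{A}_k(u,u)\geq C_1\|u\|_{H^4(D)}^2$ for $n>1$ and $\Tilde{\mathcal{A}}_k(u,u)\geq C_2\|u\|_{H^4(D)}^2$ for $0<n<1$. Rewriting these through the Riesz identification gives $(\mathbb{A}_k u,u)_{H^4(D)}\geq C_1\|u\|_{H^4(D)}^2$ and the analogous bound for $\Tilde{\mathbb{A}}_k$, so both operators are coercive and therefore, by the Lax--Milgram Lemma, boundedly invertible on $H^4_*(D)$.

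Finally, to deduce the Fredholm property I would factor $\mathbb{A}_k-k^4\mathbb{B}=\mathbb{A}_k\bigl(I-k^4\mathbb{A}_k^{-1}\mathbb{B}\bigr)$. Since $\mathbb{A}_k$ is an isomorphism and $\mathbb{B}$ is compact, the composition $\mathbb{A}_k^{-1}\mathbb{B}$ is compact, so $I-k^4\mathbb{A}_k^{-1}\mathbb{B}$ is a compact perturbation of the identity and hence Fredholm of index zero by the Riesz--Fredholm theory; composing with the isomorphism $\mathbb{A}_k$ preserves this, and the same reasoning applies verbatim to $\Tilde{\mathbb{A}}_k-k^4\mathbb{B}$. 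There is no serious obstacle here, as the corollary is essentially a synthesis of earlier results; the one point I would emphasize is the compactness of $\mathbb{B}$, since it is the structural feature that distinguishes $\mathbb{B}$ from the coercive operators $\mathbb{A}_k$ and $\Tilde{\mathbb{A}}_k$ and makes the Fredholm alternative applicable. Its proof relies on the fact that $\mathcal{B}$ involves only the Laplacian terms, which the embedding $H^4(D)\hookrightarrow H^2(D)$ controls compactly, whereas the full $H^4$-norm is needed to bound $\mathcal{A}_k(u,u)$ from below.
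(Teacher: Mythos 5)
Your proposal is correct and follows essentially the same route as the paper, which establishes this corollary through the remarks immediately preceding it (Hermitian forms of real-valued $n$ give self-adjointness, the compact embedding $H^4(D)\hookrightarrow H^2(D)$ gives compactness of $\mathbb{B}$, positivity from well-posedness) together with the coercivity theorem; the Fredholm conclusion via the factorization $\mathbb{A}_k\bigl(I-k^4\mathbb{A}_k^{-1}\mathbb{B}\bigr)$ is the standard argument the paper leaves implicit. Your write-up simply makes these steps explicit, so there is nothing to correct.
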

We are able to note that the transmission eigenvalues arise as solutions of the generalized eigenvalue problem that depends on the operators 
\begin{equation}
    \mathbb{A}_ku=\lambda_j(k)\mathbb{B}u \quad (n>1),
    \qquad \Tilde{\mathbb{A}}_ku=\lambda_j(k)\mathbb{B}u \quad (0<n<1), \label{operatorsAB}
\end{equation}
with $\lambda_j(k)-k^4=0$. By the above operator properties, we have almost satisfied the aforementioned Theorem 2.3 in \cite{cakonihaddar}. It remains to verify that the operators are positive for some $k_0$ and non-positive for some $k_1$ on a finite dimensional subspace of $H_*^4(D).$ The following result addresses what is left to show for the operators. 

\begin{theorem}\label{thm4.2}
    Assume $n>1$ or $0<n<1$ a.e. in $D$. Then, for sufficiently small $k>0$, one has
    \[
    \mathcal{A}_k(u,u)-k^4\mathcal{B}(u,u)\geq\delta\|u\|^2_{H^4(D)}
    \quad \text{or}\quad
    \Tilde{\mathcal{A}}_k(u,u)-k^4\mathcal{B}(u,u)\geq\delta\|u\|^2_{H^4(D)},
    \]
    for all $u\in H^4_*(D)$ and some $\delta>0$.
\end{theorem}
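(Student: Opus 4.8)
The plan is to obtain the claimed lower bound directly from the coercivity estimates proved in the previous theorem, taking advantage of the fact that those estimates are \emph{uniform in} $k$. Concretely, for $n>1$ the preceding result yields a constant $C_1>0$, independent of $k$, with $\mathcal{A}_k(u,u)\geq C_1\|u\|^2_{H^4(D)}$ for all $u\in H^4_*(D)$, and for $0<n<1$ it gives $\tilde{\mathcal{A}}_k(u,u)\geq C_2\|u\|^2_{H^4(D)}$ with $C_2>0$ independent of $k$. The $k$-independence is exactly what the explicit computation in that proof delivers: after Young's inequality the residual $k^8$-term is non-negative (the choice $\epsilon=\alpha+\tfrac12$ keeps $\alpha+1-\epsilon>0$) and may simply be discarded, so the surviving constant $\alpha/(1+2\alpha)$, together with the norm-equivalence from Theorem \ref{norm-equ}, depends only on $n_{\max}$ and $D$.

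Next I would estimate the perturbing form $\mathcal{B}$ from above. Since $\mathcal{B}(u,u)=\|\Delta u\|^2_{L^2(D)}\leq \|u\|^2_{H^2(D)}\leq \|u\|^2_{H^4(D)}$, there is a fixed constant $C_B>0$ such that $\mathcal{B}(u,u)\leq C_B\|u\|^2_{H^4(D)}$ for every $u\in H^4_*(D)$, and this bound carries no $k$-dependence.

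Combining the two estimates yields, in the case $n>1$,
$$\mathcal{A}_k(u,u)-k^4\mathcal{B}(u,u)\geq \big(C_1-k^4 C_B\big)\|u\|^2_{H^4(D)},$$
and analogously $\tilde{\mathcal{A}}_k(u,u)-k^4\mathcal{B}(u,u)\geq (C_2-k^4C_B)\|u\|^2_{H^4(D)}$ when $0<n<1$. It then suffices to take $k>0$ small enough that $k^4C_B<C_1$ (respectively $k^4C_B<C_2$) and set $\delta:=C_1-k^4C_B>0$ (respectively $\delta:=C_2-k^4C_B$), which proves the claim.

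I do not expect a genuine analytic obstacle here: the entire argument hinges on the observation that the coercivity constants are independent of $k$, so the term $k^4\mathcal{B}(u,u)$, carrying an explicit $k^4$ factor and controlled by a fixed multiple of $\|u\|^2_{H^4(D)}$, can be absorbed once $k$ is small. The only point deserving care is to verify that nothing in the coercivity bounds secretly depends on $k$. With this theorem in hand, condition (1) of Theorem \ref{cakonihaddar} is satisfied, namely that $\mathbb{A}_{k_0}-k_0^4\mathbb{B}$ (resp.\ $\tilde{\mathbb{A}}_{k_0}-k_0^4\mathbb{B}$) is positive definite for some $k_0>0$.
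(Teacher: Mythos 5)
Your proof is correct and follows essentially the same route as the paper: both arguments rest on the $k$-independent coercivity of $\mathcal{A}_k$ (resp.\ $\tilde{\mathcal{A}}_k$) established in the preceding theorem, bound $\mathcal{B}(u,u)=\|\Delta u\|^2_{L^2(D)}$ by a fixed multiple of the coercivity norm, and absorb the $k^4\mathcal{B}$ term for $k$ small. The only cosmetic difference is that the paper phrases the estimates in terms of $\|\Delta^2 u\|^2_{L^2(D)}$ and invokes Theorem \ref{norm-equ} to pass to $\|u\|^2_{H^4(D)}$, whereas you work with the $H^4$ norm directly.
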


\begin{proof}
    We first take address the case $0<n<1$, so  
    \begin{align*}
        \Tilde{\mathcal{A}}_k(u,u)-k^4\mathcal{B}(u,u)&=\int_D\frac{n}{1-n}|\Delta^2 u-k^4u|^2+|\Delta^2 u|^2\text{d}x-k^4\int_D|\Delta u|^2\text{d}x\\
        & \geq \|\Delta^2u\|^2_{L^2(D)}-k^4\|\Delta u\|^2_{L^2(D)}.
    \end{align*}
    Using Theorem \ref{norm-equ}, recall that we have that there exists $C_1>0$ such that 
    $$\|\Delta u\|^2_{L^2(D)}\leq \| u \|_{H^4(D)} \leq C_1\|\Delta^2 u\|^2_{L^2(D)}.$$ 
    Using this estimate gives us that 
    \begin{align*}
        \Tilde{\mathcal{A}}_k(u,u)-k^4\mathcal{B}(u,u)&\geq \|\Delta^2u\|^2_{L^2(D)}-C_1k^4|\Delta^2u\|^2_{L^2(D)}\\
        &=(1- k^4C_1)\|\Delta^2u\|^2_{L^2(D)}.  
    \end{align*}
    Thus, for all $k>0$ sufficiently small, we have that 
    $$\Tilde{\mathcal{A}}_k(u,u)-k^4\mathcal{B}(u,u)\geq\delta\|u\|^2_{H^4(D)} \quad \text{for some $\delta >0$.}$$ 
    
    We switch our attention to $n>1,$ where we have 
    \begin{align*}
        \mathcal{A}_k(u,u)-k^4\mathcal{B}(u,u)&=\int_D\frac{1}{n-1}|\Delta^2 u-k^4u|^2+k^8| u|^2\text{d}x-k^4\int_D|\Delta u|^2\text{d}x\\
       & \geq C\|\Delta^2 u\|^2_{L^2(D)}-k^4\|\Delta u\|^2_{L^2(D)}\\
       &\geq  C\|\Delta^2 u\|^2_{L^2(D)}-k^4C_1\| \Delta^2 u\|^2_{L^2(D)}\\
       & \geq (C-k^4C_1)\|\Delta^2 u\|^2_{L^2(D)},
    \end{align*}
    where $C_1>0$ is the constant, where $\|\Delta u\|^2_{L^2(D)}\leq C_1\|\Delta^2 u\|^2_{L^2(D)}$ for all $u\in H^4_*(D)$ and the constant $C>0$ is the constant, where 
    $$\mathcal{A}_k(u,u)= \int_D\frac{1}{n-1}|\Delta^2 u-k^4u|^2+k^8| u|^2\text{d}x\geq C\|\Delta u\|^2_{L^2(D)}\quad \text{for all}\quad u\in H^4_*(D).$$
  After this, we see that we have proven the claim as 
  $$\mathcal{A}_k(u,u)-k^4\mathcal{B}(u,u)\geq\delta\|u\|^2_{H^4(D)}\quad \text{for some $\delta >0$.}$$ 
  Now, we show the second part of Lemma \eqref{cakonihaddar}. 
\end{proof}
\begin{theorem}
    Assume $n>1$ or $0<n<1$ a.e. in $D$. Then, for some positive $k>0$,
    \[
    \mathcal{A}_k(u,u)-k^4\mathcal{B}(u,u)
    \quad \text{or}\quad
    \Tilde{\mathcal{A}}_k(u,u)-k^4\mathcal{B}(u,u),
    \]
    is non-positive on a $N$--dimensional subspace for any $N \in \N$.
\end{theorem}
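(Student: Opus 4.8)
The plan is to verify the second hypothesis of Theorem \ref{cakonihaddar}: for every $N\in\N$ I must exhibit a single wavenumber $k>0$ and an $N$-dimensional subspace $V_N\subset H^4_*(D)$ on which $\tilde{\mathcal A}_k(u,u)-k^4\mathcal B(u,u)$ (for $0<n<1$), respectively $\mathcal A_k(u,u)-k^4\mathcal B(u,u)$ (for $n>1$), is non-positive. I treat $0<n<1$ in detail; the case $n>1$ is entirely parallel. Since \eqref{assume-n} guarantees $\beta:=\sup_D\frac{n}{1-n}<\infty$, the positive term can be dominated, $\int_D\frac{n}{1-n}|\Delta^2u-k^4u|^2\,\text{d}x\le\beta\|\Delta^2u-k^4u\|^2_{L^2(D)}$, so it suffices to make $\beta\|\Delta^2u-k^4u\|^2+\|\Delta^2u\|^2-k^4\|\Delta u\|^2\le0$ on $V_N$.

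First I localize. Given $N$, I fix $N$ pairwise disjoint balls $B(x_j,\rho)\subset D$ of a \emph{common} radius $\rho$ (possible for $\rho$ small), and build all test functions as rescaled translates $u_j(x)=U\bigl((x-x_j)/\rho\bigr)$ of one reference function $U$, extended by zero. Taking $U\in C^\infty_c(B_1)$ guarantees $u_j\in H^4_*(D)$ and that the $u_j$ have disjoint supports; hence $\Delta^2u-k^4u$, $\Delta^2u$ and $\Delta u$ split as disjoint-support sums over $j$ for $u=\sum_j c_ju_j$, so the form is block diagonal and $\tilde{\mathcal A}_k(u,u)-k^4\mathcal B(u,u)=\sum_j|c_j|^2\bigl(\tilde{\mathcal A}_k(u_j,u_j)-k^4\mathcal B(u_j,u_j)\bigr)$. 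Thus it suffices to make the form negative on a \emph{single} bump, and the common radius $\rho$ forces one common $k$ for all $N$ of them. With $y=(x-x_j)/\rho$ one has in $\R^2$ that $\|\Delta^2u_j\|^2=\rho^{-6}\|\Delta^2U\|^2$, $\|\Delta u_j\|^2=\rho^{-2}\|\Delta U\|^2$ and $\|u_j\|^2=\rho^{2}\|U\|^2$ (all $L^2$ norms), so introducing the scale-invariant parameter $\Lambda:=k^4\rho^4$ and using $\langle\Delta^2U,U\rangle_{L^2}=\|\Delta U\|^2_{L^2}$ (valid since $U\in H^4_*(B_1)\subset H^2_0(B_1)$) gives
\[
\tilde{\mathcal A}_k(u_j,u_j)-k^4\mathcal B(u_j,u_j)\le\rho^{-6}Q(\Lambda),\qquad Q(\Lambda):=\beta\|U\|^2\Lambda^2-(2\beta+1)\|\Delta U\|^2\Lambda+(\beta+1)\|\Delta^2U\|^2.
\]

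The leading coefficient $\beta\|U\|^2$ is positive, so $Q$ is negative on a nonempty interval of $\Lambda$ exactly when its discriminant is positive, i.e. when
\[
\frac{\|\Delta U\|^4_{L^2}}{\|U\|^2_{L^2}\,\|\Delta^2U\|^2_{L^2}}>\frac{4\beta(\beta+1)}{(2\beta+1)^2}.
\]
The right-hand side is a fixed constant strictly below $1$, whereas Cauchy--Schwarz (with the identity above) forces the left-hand side to be $\le1$. Hence the whole problem reduces to exhibiting one $U\in H^4_*(B_1)$ whose Cauchy--Schwarz ratio is close enough to $1$.

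I expect this last construction to be the only genuine obstacle, precisely because exact biharmonic eigenfunctions $\Delta^2U=\Lambda U$ lying in $H^4_*(B_1)$ do not exist (the four vanishing traces overdetermine the fourth-order equation, by the unique-continuation argument of Section \ref{theproblem}), so Cauchy--Schwarz equality is unreachable and I must settle for near-equality. I would use the modulated family $U=\chi\,e^{\mathrm{i}t\,\hat d\cdot y}$ with $\chi\in C^\infty_c(B_1)$ fixed, $\hat d\in\mathbb{S}^1$, and frequency $t\to\infty$. A direct expansion (in which the odd-order cross terms vanish by compact support) yields $\|U\|^2=\|\chi\|^2$, $\|\Delta U\|^2=t^4\|\chi\|^2+O(t^2)$ and $\|\Delta^2U\|^2=t^8\|\chi\|^2+O(t^6)$, so the ratio equals $1+O(t^{-2})\to1$. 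Fixing $t$ large enough that the discriminant inequality holds, then choosing $\Lambda$ in the interior of the resulting interval (its left endpoint is the borderline value $\Lambda\approx t^4$, so $\Lambda$ must be taken slightly larger, which explains why the naive ``eigenfunction'' choice fails), and finally setting $k=\Lambda^{1/4}/\rho$, makes $Q(\Lambda)<0$ and hence the form strictly negative on $V_N=\mathrm{span}\{u_1,\dots,u_N\}$. For $n>1$ the identical computation, with $\gamma:=\sup_D\frac1{n-1}$ replacing $\beta$ and the term $k^8\|u\|^2$ replacing $-k^4\|\Delta u\|^2$ in the expansion, produces the analogous quadratic $(\gamma+1)\|U\|^2\Lambda^2-(2\gamma+1)\|\Delta U\|^2\Lambda+\gamma\|\Delta^2U\|^2$ with the same discriminant criterion, so the very same reference function $U$ works.
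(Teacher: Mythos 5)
Your argument is correct, but it follows a genuinely different route from the paper's. The paper builds its $N$-dimensional subspace out of transmission eigenfunctions of the \emph{constant-coefficient} problem \eqref{transmi1}--\eqref{transmi2} on $M(\varepsilon)$ disjoint balls $B(x_j,\varepsilon)\subset D$ with index $n_{\min}$ (resp.\ $n_{\max}$): extending $u_j=v_j-w_j$ by zero and using the pointwise bound $\tfrac{1}{n-1}\le\tfrac{1}{n_{\min}-1}$, it shows that $\mathcal{A}_{k_\varepsilon}(\Tilde u_j,\Tilde u_j)-k_\varepsilon^4\mathcal{B}(\Tilde u_j,\Tilde u_j)\le 0$ at the ball's first transmission eigenvalue $k_\varepsilon$. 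That route is shorter and ties the first eigenvalue of $D$ to that of a small ball, but it rests on the existence of real transmission eigenvalues for a ball with constant index, which the paper supports only by pointing to the separation-of-variables computation of Section \ref{numerics}. Your proof removes that input entirely: the same disjoint-bump localization, but with the explicit modulated test function $U=\chi\,e^{\mathrm{i}t\,\hat d\cdot y}$, the identity $\langle\Delta^2U,U\rangle_{L^2}=\|\Delta U\|_{L^2}^2$ on $H^2_0$, and a discriminant analysis of the quadratic $Q(\Lambda)$ in the scale-invariant variable $\Lambda=k^4\rho^4$, with Cauchy--Schwarz showing the relevant ratio can approach $1$ while the threshold $4\beta(\beta+1)/(2\beta+1)^2$ stays strictly below $1$. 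The individual steps check out — the $\mathbb{R}^2$ scaling exponents, the block-diagonality from disjoint supports, the asymptotics $\|\Delta U\|^2=t^4\|\chi\|^2+O(t^2)$ and $\|\Delta^2U\|^2=t^8\|\chi\|^2+O(t^6)$, and (worth stating explicitly) the fact that both roots of $Q$ are positive, so an admissible $\Lambda_0>0$ exists. What your approach buys is a fully self-contained, elementary argument covering both sign cases of $n$ with one reference bump; what it costs is a longer computation and the loss of the clean comparison $k_1(n)\le k_\varepsilon$ with a ball eigenvalue that the paper's construction provides.
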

\begin{proof}
    To prove the claim, we consider the case for $n>1$ and note that the other case $0<n<1$ can be proved in a similar way. To this end, we begin by defining $$B_j=B(x_j,\varepsilon)=\{x\in \mathbb{R}^{2}:|x-x_j| <\varepsilon \},$$ where $x_j\in D$ and $\varepsilon>0.$ Now, let ${M}(\varepsilon)$ be the number of disjoint balls, i.e., where $\overline{B}_j\cap\overline{B}_i$ are empty with $\varepsilon$ small enough such that $\overline{B}_j\subset D$. Since we have assumed that $n>1$ this would imply that $n_{\text{min}} >1$ and by separation of variables (see Section \ref{numerics}) we have that there exists smooth non--trivial solutions to 
\begin{align}
\Delta^2 w_j-k^4 n_{\text{min}} w_j = 0 \quad \text{ and } \quad  \Delta^2 v_j-k^4v_j = 0  \quad & \text{in} \quad   B_j, \label{transmi1}\\
w_j=v_j,\quad \partial_{\nu}w_j=\partial_{\nu}v_j , \quad \Delta w_j =\Delta v_j, \quad \text{and}\quad \partial_{\nu}\Delta w_j=\partial_{\nu}\Delta v_j  \quad & \text{on} \quad  \partial B_j .\label{transmi2}
\end{align}
In order to construct our subspace, let $u_j=v_j-w_j\in H^4_*(B_j)$. 

Now, due to the zero boundary conditions in $H^4_*(B_j)$ we have that $\Tilde{u}_j$ being the extension of $u_j$ by zero to $D$ in $H^4_*(D)$. It is clear that the supports of $\Tilde{u}_j$ are disjoint so $\{\Tilde{u}_j \, : \, j=1 , \cdots , M(\varepsilon) \}$ is an orthogonal set in $H^4_*(D).$ Then, we have that $$W_{M(\varepsilon)}=\text{span}\{ \Tilde{u}_j \, : \, j=1 , \cdots , M(\varepsilon)\}$$ 
forms an $M(\varepsilon)$ dimensional subspace of $H^4_*(D).$ Moreover, for any transmission eigenvalue $k_\varepsilon$ of \eqref{transmi1}--\eqref{transmi2} we have 
\begin{align*}
    0&=\int_D \frac{1}{n_{\text{min}} -1}(\Delta^2 \Tilde{u}_j-k^4\Tilde{u}_j)(\Delta^2 \overline{\Tilde{u}_j}-k^4 n_{\text{min}}\overline{\Tilde{u}_j})\text{d}x\\
   &= \int_{B_j} \frac{1}{n_{\text{min}} -1}(\Delta^2 \Tilde{u}_j-k^4\Tilde{u}_j)(\Delta^2 \overline{\Tilde{u}_j}-k^4 n_{\text{min}} \overline{\Tilde{u}_j})\text{d}x\\
   &=\int_{B_j} \frac{1}{n_{\text{min}} -1}|\Delta^2 \Tilde{u}_j-k^4\Tilde{u}_j|^2+k^8|\Tilde{u}_j|^2\text{d}x-k^4\int_{B_j}|\Delta \Tilde{u}_j|^2\text{d}x.
\end{align*}
Now, let $k_{\varepsilon}$ be the first transmission eigenvalue of \eqref{transmi1}--\eqref{transmi2} in some ball $B_j$ with the eigenfunction $u_j$. Notice that since \eqref{transmi1}--\eqref{transmi2} has constant coefficients that each $B_j$ has the same set of eigenvalues associated with \eqref{transmi1}--\eqref{transmi2}. Then, for the extension $\Tilde{u}_j$ we have 
\begin{align*}
    \mathcal{A}_{k_{\varepsilon}}(\Tilde{u}_j,\Tilde{u}_j)-k_\varepsilon^4\mathcal{B}_{k_{\varepsilon}}(\Tilde{u}_j,\Tilde{u}_j)&=\int_{D} \frac{1}{n -1}|\Delta^2 \Tilde{u}_j-k_{\varepsilon}^4\Tilde{u}_j|^2+k_{\varepsilon}^8|\Tilde{u}_j|^2\text{d}x-k_{\varepsilon}^4\int_{B_j}|\Delta \Tilde{u}_j|^2\text{d}x\\
    & \leq \int_{B_j} \frac{1}{n_{\text{min}} -1}|\Delta^2 \Tilde{u}_j-k_{\varepsilon}^4\Tilde{u}_j|^2+k_{\varepsilon}^8|\Tilde{u}_j|^2\text{d}x-k_{\varepsilon}^4\int_{B_j}|\Delta \Tilde{u}_j|^2\text{d}x=0.
\end{align*}
Therefore, by appealing to linearity and the disjoint supports we obtain that 
$$\mathcal{A}_{k_{\varepsilon}}({u},{u})-k_\varepsilon^4\mathcal{B}_{k_{\varepsilon}}({u},{u})\leq 0 \quad \text{ for all $u\in W_{M(\varepsilon)}$}.$$ 
Note that as $\varepsilon \to 0$, we have that $M(\varepsilon)\to \infty$ which proves the claim. 
\end{proof}
\begin{theorem}
    If $n>1$ or $0<n<1$ a.e. in $D$, then the biharmonic transmission problem possesses infinitely many real transmission eigenvalues.
\end{theorem}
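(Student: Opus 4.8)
The plan is to assemble the abstract machinery of Theorem \ref{cakonihaddar} with the operator properties recorded in Corollary \ref{operators} and the two sign conditions furnished by Theorem \ref{thm4.2} and the preceding disjoint-ball construction. Throughout I take the Hilbert space to be $U = H^4_*(D)$, and I set $A_k = \mathbb{A}_k$, $B = \mathbb{B}$ in the case $n > 1$, and $A_k = \tilde{\mathbb{A}}_k$, $B = \mathbb{B}$ in the case $0 < n < 1$. The first task is to confirm the standing hypotheses of Theorem \ref{cakonihaddar}. The map $k \mapsto A_k$ is continuous since the sesquilinear forms \eqref{A}--\eqref{Ati} depend polynomially on $k$ through the factors $k^4$ and $k^8$; self-adjointness and positive definiteness of $A_k$, together with compactness, non-negativity, and self-adjointness of $B$, are exactly the content of Corollary \ref{operators}. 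Thus the transmission eigenvalues coincide with the values of $k$ for which the $j$-th generalized eigenvalue $\lambda_j(k)$ of \eqref{operatorsAB} satisfies $\lambda_j(k) = k^4$.

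Next I would verify the two crossing conditions of Theorem \ref{cakonihaddar}. For condition (1), Theorem \ref{thm4.2} supplies a $k_0 > 0$ sufficiently small with $A_{k_0}(u,u) - k_0^4 B(u,u) \ge \delta \|u\|^2_{H^4(D)}$ for all $u \in H^4_*(D)$, which is precisely the positive definiteness of $A_{k_0} - k_0^4 B$. For condition (2), I fix an arbitrary $N \in \N$ and appeal to the preceding theorem: choosing $\varepsilon$ small enough that the number of disjoint balls satisfies $M(\varepsilon) \ge N$, and letting $k_1 = k_\varepsilon$ be the first transmission eigenvalue of the constant-coefficient problem \eqref{transmi1}--\eqref{transmi2}, the form $A_{k_1} - k_1^4 B$ is non-positive on the $N$-dimensional subspace $W_{M(\varepsilon)}$.

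A point requiring care is the ordering $k_0 < k_1$, which is what permits Theorem \ref{cakonihaddar} to be invoked on $[k_0,k_1]$. Under the rescaling $x = \varepsilon\xi$ the equation $\Delta^2 w - k^4 n w = 0$ on $B(x_j,\varepsilon)$ becomes $\Delta_\xi^2 w - (k\varepsilon)^4 n w = 0$ on the unit ball, while the homogeneous transmission conditions are scale invariant; hence the first transmission eigenvalue of $B(x_j,\varepsilon)$ equals $\kappa_1/\varepsilon$, where $\kappa_1$ is the first transmission eigenvalue on the unit ball, so $k_\varepsilon \to \infty$ as $\varepsilon \to 0$. Consequently, for $\varepsilon$ small enough we simultaneously obtain $M(\varepsilon) \ge N$ and $k_1 = k_\varepsilon > k_0$, and Theorem \ref{cakonihaddar} then produces at least $N$ real transmission eigenvalues in $[k_0, k_1]$.

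Finally, since $N$ was arbitrary, this yields arbitrarily many real transmission eigenvalues. To upgrade ``arbitrarily many in $[k_0, k_1(N)]$'' to ``infinitely many distinct,'' I invoke the discreteness of Theorem \ref{TEdiscrete}: the transmission eigenvalues have no finite accumulation point, so a set containing at least $N$ of them for every $N$ must be infinite. The case $0 < n < 1$ is identical with $\tilde{\mathbb{A}}_k$ in place of $\mathbb{A}_k$. I expect the main obstacle to be the bookkeeping around the ordering $k_0 < k_1$ via the scaling behaviour of $k_\varepsilon$, and the final appeal to discreteness to convert the counting bound into a genuinely infinite set; the remaining steps are direct citations of hypotheses already verified in the preceding results.
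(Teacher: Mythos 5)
Your proposal is correct and follows essentially the same route as the paper, which simply invokes Theorem~\ref{cakonihaddar} together with Corollary~\ref{operators}, Theorem~\ref{thm4.2} (positivity of $A_{k_0}-k_0^4B$ for small $k_0$), and the disjoint-ball construction (non-positivity on $W_{M(\varepsilon)}$ at $k_1=k_\varepsilon$). In fact you are more careful than the paper: the scaling argument showing $k_\varepsilon=\kappa_1/\varepsilon\to\infty$, which guarantees the ordering $k_0<k_1$ needed to apply the abstract theorem on $[k_0,k_1]$, is a genuine detail the paper leaves implicit.
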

\begin{proof}
    The proof follows directly by applying Theorems \ref{cakonihaddar} and Corollary \ref{operators}, where we have proven that our operator satisfies the assumptions in the previous results.
\end{proof}

Thus, the biharmonic scattering problem dealing with transmission eigenvalues  admits infinitely many real transmission eigenvalues. While this has long been known in the acoustic setting, here we extended the result to biharmonic scattering. In the next section, we will examine how these eigenvalues depend on the physical parameters of the model.

\section{Monotonicity of the Transmission Eigenvalues}\label{limit}

In this section we investigate how the transmission eigenvalues depend on the material coefficient $n$. Our focus is on the behavior of the first transmission eigenvalue, and we establish that it can be seen as a monotonic functions with respect to the parameter $n$. This observation provides a means to extract information about $n$ from the spectral data. In particular, we find that:
\begin{enumerate}
    \item for $0<n<1$, the first eigenvalue grows as $n$ increases, 
    \item for $n>1$, the first eigenvalue decreases as $n$ increases.
\end{enumerate}
As a consequence, one can uniquely determine a constant refractive index from knowledge of the first transmission eigenvalue. Recall that transmission eigenvalues are roots of
\begin{equation}
    \lambda_{j}(k,n)-k^4(n)=0, \label{monotrans1}
\end{equation}
and the smallest root corresponds to the first transmission eigenvalue. More precisely, for $u\neq 0$ we have the characterizations
\begin{equation}
     \lambda_1(k;n)=\underset{u\in H^4_*(D)}{\min}\frac{\mathcal{A}_k(u,u)}{\mathcal{B}(u,u)} \quad (n>1),
     \label{min1}
 \end{equation}
and
 \begin{equation}
     \lambda_1(k;n)=\underset{u\in H^4_*(D)}{\min}\frac{\Tilde{\mathcal{A}}_k(u,u)}{\mathcal{B}(u,u)} \quad (0<n<1),
     \label{min2}
 \end{equation}
where the forms $\mathcal{A},\Tilde{\mathcal{A}}$, and $\mathcal{B}$ are defined in \eqref{A}--\eqref{B}.  
The minimizers of \eqref{min1} and \eqref{min2} are precisely the eigenfunctions associated with $\lambda_1(k;n)$. We denote the first transmission eigenvalue by $k_1(n)$ and state the main monotonicity result.

\begin{theorem}
    Let $n_1,n_2$ be constants with $1<n_1\leq n_2$ or $0<n_1\leq n_2<1$. Then the first transmission eigenvalue $k_1(n)$ satisfies
    \begin{enumerate}
        \item if $n_1>1$, then $k_1(n_2)\leq k_1(n_1)$,
        \item if $n_2<1$, then $k_1(n_1)\leq k_1(n_2)$.
    \end{enumerate}
    Furthermore, if the inequalities on $n_1,n_2$ are strict, then $k_1(n)$ is strictly monotone with respect to $n$. \label{monotrans}
\end{theorem}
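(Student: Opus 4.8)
The plan is to reduce the statement about the root $k_1(n)$ to a statement about the Rayleigh quotient $\lambda_1(k;n)$ at a fixed wavenumber, and then transfer the monotonicity from $\lambda_1$ to its root by a sign-comparison argument. The whole scheme rests on the variational characterizations \eqref{min1}--\eqref{min2} together with the nonnegativity estimate of Theorem \ref{thm4.2}.

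First I would fix $k>0$ and compare the sesquilinear forms at two indices. For $n>1$ the only dependence of $\mathcal{A}_k$ on $n$ sits in the coefficient $1/(n-1)$, which is strictly decreasing in $n$; hence for $1<n_1\le n_2$ one has $\mathcal{A}_k(u,u)\big|_{n_2}\le \mathcal{A}_k(u,u)\big|_{n_1}$ for every $u\in H^4_*(D)$, since the remaining term $k^8\|u\|_{L^2(D)}^2$ is independent of $n$. Dividing by $\mathcal{B}(u,u)>0$ and minimizing in \eqref{min1} gives $\lambda_1(k;n_2)\le\lambda_1(k;n_1)$ for all $k$. For $0<n<1$ the coefficient $n/(1-n)$ in $\widetilde{\mathcal{A}}_k$ is strictly increasing in $n$, so the pointwise inequality reverses and \eqref{min2} yields $\lambda_1(k;n_1)\le\lambda_1(k;n_2)$. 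Thus $\lambda_1(\,\cdot\,;n)$ is monotone in $n$ at every fixed $k$: decreasing for $n>1$ and increasing for $0<n<1$.

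Next I would transfer this to the root. Set $g_n(k):=\lambda_1(k;n)-k^4$, so that $k_1(n)$ is the smallest positive zero of $g_n$; Theorem \ref{thm4.2} guarantees $g_n(k)>0$ for all sufficiently small $k>0$, and $k\mapsto\lambda_1(k;n)$ is continuous because the forms depend polynomially on $k$ and the min--max eigenvalue inherits continuity. In the case $n>1$, the pointwise inequality $\lambda_1(k;n_2)\le\lambda_1(k;n_1)$ gives $g_{n_2}\le g_{n_1}$ for all $k$, so evaluating at $k=k_1(n_1)$ yields $g_{n_2}(k_1(n_1))\le g_{n_1}(k_1(n_1))=0$; since $g_{n_2}$ is positive near the origin, the intermediate value theorem produces a zero in $(0,k_1(n_1)]$, and hence $k_1(n_2)\le k_1(n_1)$. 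The case $0<n<1$ is identical with the indices exchanged, giving $k_1(n_1)\le k_1(n_2)$.

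For the strict monotonicity, the only inequality that can collapse to equality is the coefficient comparison, so I must certify that the relevant minimizer $u_0$ satisfies $\int_D|\Delta^2u_0-k^4u_0|^2\,\mathrm{d}x>0$. Evaluating the smaller index's eigenvalue at the minimizer of the larger form then upgrades each $\le$ to a strict $<$ at the fixed wavenumber $k=k_1(\cdot)$, which in turn makes $g$ strictly negative there and forces the zero strictly inside the open interval. The step I expect to be the main obstacle is precisely this nondegeneracy $\Delta^2u_0-k^4u_0\not\equiv0$: if it vanished, then $w:=(\Delta+k^2)u_0$ would solve $(\Delta-k^2)w=0$ in $D$ with $w=\partial_\nu w=0$ on $\partial D$ (a consequence of $u_0=\partial_\nu u_0=\Delta u_0=\partial_\nu\Delta u_0=0$), so unique continuation gives $w\equiv0$, and then $(\Delta+k^2)u_0=0$ with vanishing Cauchy data forces $u_0\equiv0$, contradicting that $u_0$ is an eigenfunction. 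This unique-continuation step is the delicate point; the remainder is the monotonicity bookkeeping above.
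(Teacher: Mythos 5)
Your proposal is correct and follows essentially the same route as the paper: compare the Rayleigh quotients \eqref{min1}--\eqref{min2} at a fixed wavenumber using the monotonicity of the coefficients $1/(n-1)$ and $n/(1-n)$, evaluate at the minimizer to get $\lambda_1(k_1;n_2)-k_1^4\leq 0$, and combine with the positivity for small $k$ from Theorem \ref{thm4.2} and continuity to locate the smallest root. The one place you go beyond the paper is the strict case, where you supply the nondegeneracy argument ($\Delta^2 u_0-k^4u_0\not\equiv 0$ via unique continuation for $(\Delta\pm k^2)$ with vanishing Cauchy data) that the paper asserts without proof; that addition is sound.
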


\begin{proof}
    We begin with the case $n_1>1$. Let $k_i = k_1(n_i)$ denote the first transmission eigenvalue corresponding to $n=n_i$ for $i=1,2$. For any $u\in H^4_*(D)$ normalized by $\|\Delta u\|_{L^2(D)}=1$, the ordering $n_1\leq n_2$ implies
    \begin{align*}
        \lambda_1(k_1;n_2)&\leq \int_D \frac{1}{n_2-1}\big|\Delta^2 u-k_1^4u\big|^2+k_1^8|u|^2\,\text{d}x \\
        &\leq \int_D \frac{1}{n_1-1}\big|\Delta^2 u-k_1^4u\big|^2+k_1^8|u|^2\,\text{d}x.
    \end{align*}
    Choosing $u=u_1$, the normalized eigenfunction associated with $k_1(n_1)$, we obtain
    \[
      \lambda_1(k_1;n_1)= \int_D \frac{1}{n_1-1}\big|\Delta^2 u_1-k_1^4u_1\big|^2+k_1^8|u_1|^2\,\text{d}x,
    \]
    where $u_1$ is the minimizer of \eqref{min1}. Thus, we have that  $$\lambda_1(k_1;n_2)\leq \lambda_1(k_1;n_1) \quad \text{which implies that } \quad \lambda_1(k_1;n_2) - k_1^4 \leq \lambda_1(k_1;n_1)- k_1^4 = 0.$$ 
    Recall, that from Theorem \ref{thm4.2} we have that there is a $\tau>0$ sufficiently small such that 
    $$\lambda_1(\tau;n_2) - \tau^4 >0.$$
   By continuity, one has that $\lambda_1(k,n_2)-k^4$ has one root in $[\sqrt[4]{\tau},k_1]$. Since $k_1$ is the smallest root of $\lambda_1(k;n_2)-k^4$, we can conclude that $k_2\leq k_1$ which proves the claim for this case. 
    
    For the case $0<n_1\leq n_2<1$, we see that this can be handled in a similar manner. Indeed, for any $u\in H^4_*(D)$ normalized by $\|\Delta u\|_{L^2(D)}=1$, the ordering $n_1\leq n_2$ implies
    \begin{align*}
        \lambda_1(k_2;n_1)&\leq \int_D \frac{n_1}{1-n_1}|\Delta^2 u-k_2^4u|^2 \, \text{d}x+\|\Delta^2u\|^2_{L^2(D)} \\
        &\leq \int_D \frac{n_2}{1-n_2}|\Delta^2 u-k_2^4u|^2 \, \text{d}x+\|\Delta^2u\|^2_{L^2(D)}.
    \end{align*}
    Choosing $u=u_2$, the normalized eigenfunction associated with $k_1(n_2)$, we obtain
    \[
    \lambda_1(k_2;n_2)= \int_D \frac{n_2}{1-n_2}|\Delta^2 u_2-k_2^4u_2|^2 \, \text{d}x+\|\Delta^2u\|^2_{L^2(D)},
    \]
    where we see that $\lambda_1(k_2;n_1) \leq  \lambda_1(k_2;n_2)$. Therefore, the proof for this case is now similar to the previous case. 
\end{proof}
Using the proof of the previous results, we obtain a uniqueness result for the refractive index $n.$ This comes from the fact that we know that the refractive index is strictly monotone with respect to the first transmission eigenvalue. 

\begin{corollary}
    Suppose $n$ is a constant refractive index with either $n>1$ or $0<n<1$. Then $n$ is uniquely determined by the first transmission eigenvalue.
\end{corollary}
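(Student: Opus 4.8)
The plan is to read off the corollary as an immediate consequence of the strict monotonicity proved in Theorem \ref{monotrans}. The conceptual point is that a strictly monotone real-valued function is injective, and an injective map is invertible on its range; hence, if the assignment $n \mapsto k_1(n)$ is strictly monotone on each admissible regime, then the value $k_1(n)$ determines $n$ uniquely.

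First I would fix a regime and argue by contraposition. Take $n>1$: suppose $n_1,n_2>1$ are constant indices with $n_1\neq n_2$, and assume without loss of generality that $n_1<n_2$. The strict-inequality clause of Theorem \ref{monotrans} then yields $k_1(n_2)<k_1(n_1)$, so in particular $k_1(n_1)\neq k_1(n_2)$. Thus distinct indices produce distinct first eigenvalues, which is precisely the injectivity of $n\mapsto k_1(n)$ on $(1,\infty)$, and therefore $n$ is recovered uniquely from $k_1(n)$. I would then repeat the identical argument verbatim in the regime $0<n<1$, this time invoking the second (strictly increasing) case of Theorem \ref{monotrans}: for $0<n_1<n_2<1$ one has $k_1(n_1)<k_1(n_2)$, again forcing distinct eigenvalues for distinct indices.

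Since the underlying monotonicity is already established, there is no genuine analytic obstacle; the only point requiring care is a matter of hypotheses rather than computation. Specifically, the argument relies entirely on the \emph{strict} version of the monotonicity — the final sentence of Theorem \ref{monotrans} — and would fail if only weak monotonicity were available, so I would state explicitly that we invoke that strict clause. I would also make clear that uniqueness is asserted within a single prescribed regime: the statement presupposes prior knowledge of whether $n>1$ or $0<n<1$, since Theorem \ref{monotrans} compares eigenvalues only for indices lying on the same side of $1$ and does not by itself rule out coincidences of the first eigenvalue across the two regimes.
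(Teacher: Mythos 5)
Your proposal is correct and matches the paper's intent exactly: the paper offers no formal proof beyond the remark that the corollary follows from the strict monotonicity of $k_1(n)$ established in Theorem \ref{monotrans}, which is precisely the injectivity argument you spell out. Your added caveat that uniqueness holds only within a prescribed regime ($n>1$ or $0<n<1$) is a sensible clarification the paper leaves implicit.
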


\section{Numerical Validation}\label{numerics}
In this section, we provide some numerical examples that validate the theoretical results of the previous sections. We will provide some examples for the monotonicity of the eigenvalues with respect to the given constant parameter $n$ given in Theorem \ref{monotrans} in different shapes and regions with two different methods; separation of variables and boundary integral equations. 
Recall that we have to solve the problem
\begin{align}
\Delta^2 w - k^4 n w = 0, \quad & \text{in } D, \label{eq1} \\
\Delta^2 v - k^4 v = 0, \quad & \text{in } D, \label{eq2} \\
w = v, \quad & \text{on } \partial D, \label{bc1}\\
\partial_\nu w = \partial_\nu v, \quad & \text{on } \partial D, \label{bc2}\\
\Delta w = \Delta v,  \quad & \text{on } \partial D, \label{bc3}\\
\partial_\nu \Delta w = \partial_\nu \Delta v, \quad & \text{on } \partial D. \label{bc4}
\end{align}
We use a similar decomposition to the one used for the scattered field in the exterior of the scatterer discussed in Section \ref{theproblem}. Therefore, we see that the eigenfunctions in (\ref{eq1}) and (\ref{eq2}) can be written as $w=w_{pr}+w_{ev}$ and $v=v_{pr}+v_{ev}$, that satisfy  
\[\left(\Delta+k^2\sqrt{n}\right)w_{pr}=0\quad \text{and}\quad \left(\Delta-k^2\sqrt{n}\right)w_{ev}=0,\quad \text{in } D\]
and
\[\left(\Delta+k^2\right)v_{pr}=0\quad \text{and}\quad \left(\Delta-k^2\right)v_{ev}=0,\quad \text{in } D,\]
respectively. 
\subsection{Separation of variables}
First, we explain how to compute the transmission eigenvalues for a disk of radius one.
Because of the Helmholtz decomposition for $w=w_{pr}+w_{ev}$ and $v=v_{pr}+v_{ev}$, we use the Fourier-Bessel ansatz
\begin{eqnarray*}w&=&\sum_{p\in \mathbb{N}_0}a_p^{(w)} J_p(\sqrt[4]{n}k r)\mathrm{e}^{\mathrm{i}p\theta}+b_p^{(w)} J_p(\sqrt[4]{n}\mathrm{i}k r)\mathrm{e}^{\mathrm{i}p\theta}\,,\\
v&=&\sum_{p\in \mathbb{N}_0}a_p^{(v)} J_p(k r)\mathrm{e}^{\mathrm{i}p\theta}+b_p^{(v)}J_p(\mathrm{i}k r)\mathrm{e}^{\mathrm{i}p\theta}\,.
\end{eqnarray*}
Using the first boundary condition (\ref{bc1}) yields
\[J_p(\sqrt[4]{n}k )a_p^{(w)} + J_p(\sqrt[4]{n}\mathrm{i}k )b_p^{(w)} -J_p(k )a_p^{(v)} - J_p(\mathrm{i}k )b_p^{(v)}=0\,.\]
With the second boundary condition (\ref{bc2}) gives
\[\sqrt[4]{n}J_p'(\sqrt[4]{n}k )a_p^{(w)} + \sqrt[4]{n}\mathrm{i} J_p'(\sqrt[4]{n}\mathrm{i}k )b_p^{(w)} -J_p'(k )a_p^{(v)} - \mathrm{i}J_p'(\mathrm{i}k )b_p^{(v)}=0\,,\]
where we divided by $k$. Using the third boundary condition (\ref{bc3}) yields
\[-\sqrt{n}J_p(\sqrt[4]{n}k )a_p^{(w)} +\sqrt{n} J_p(\sqrt[4]{n}\mathrm{i}k )b_p^{(w)} +J_p(k )a_p^{(v)} - J_p(\mathrm{i}k )b_p^{(v)}=0\,,\]
where we divided by $k^2$. With boundary condition (\ref{bc4}) we get
\[-\sqrt[4]{n^3}J_p'(\sqrt[4]{n}k )a_p^{(w)} +\sqrt[4]{n^3}\mathrm{i} J_p'(\sqrt[4]{n}\mathrm{i}k )b_p^{(w)} +J_p'(k )a_p^{(v)} - \mathrm{i}J_p'(\mathrm{i}k )b_p^{(v)}=0\,,\]
where we divided by $k^3$. The last four equations can be written as
\[\begin{pmatrix}
    J_p(\sqrt[4]{n}k )& J_p(\sqrt[4]{n}\mathrm{i}k )& -J_p(k )&- J_p(\mathrm{i}k )\\ 
    \sqrt[4]{n}J_p'(\sqrt[4]{n}k )& \sqrt[4]{n}\mathrm{i} J_p'(\sqrt[4]{n}\mathrm{i}k )& -J_p'(k )&-\mathrm{i} J_p'(\mathrm{i}k )\\
   -\sqrt{n}J_p(\sqrt[4]{n}k ) & \sqrt{n}J_p'(\sqrt[4]{n}\mathrm{i}k )&J_p(k ) &- J_p(\mathrm{i}k )\\
  -\sqrt[4]{n^3}J_p'(\sqrt[4]{n}k )  & \sqrt[4]{n^3}\mathrm{i} J_p'(\sqrt[4]{n}\mathrm{i}k )& J_p'(k )&- \mathrm{i}J_p'(\mathrm{i}k )
\end{pmatrix}\begin{pmatrix}
a_p^{(w)}\\
b_p^{(w)}\\
a_p^{(v)}\\
b_p^{(v)}
\end{pmatrix}=\begin{pmatrix}
0\\
0\\
0\\
0
\end{pmatrix}
\,.\]
Then, the roots of the determinant are numerically computed to high accuracy for a fixed $p\in\mathbb{N}_0$ and a given refraction index $n$.
\subsection{Boundary integral equations}

To solve the four `Helmholtz' equations associated with the eigenvalue system, we will use a single-layer ansatz
\begin{eqnarray*}
w_{pr}(x)&=&\mathrm{SL}_{k\sqrt[4]{n}}[\varphi_w](x)\,,\quad x\in D\,,\\
w_{ev}(x)&=&\mathrm{SL}_{\mathrm{i}k\sqrt[4]{n}}[\psi_w](x)\,,\quad x\in D\,,\\
v_{pr}(x)&=&\mathrm{SL}_{k}[\varphi_v](x)\,,\quad x\in D\,,\\
v_{ev}(x)&=&\mathrm{SL}_{\mathrm{i}k}[\psi_v](x),\quad x\in D,
\end{eqnarray*}
where the density functions $\varphi_w$, $\psi_w$, $\varphi_v$, and $\psi_v$ are unknown. Here, $\mathrm{SL}_{\tau}[\phi](x)$ denotes the single-layer operator with wave number $\tau$ applied to the function $\phi$ defined for $x\notin \partial D$. Using the boundary condition (\ref{bc1}) along with the jump conditions yields
\begin{eqnarray}
\mathrm{S}_{k\sqrt[4]{n}}[\varphi_w](x)+\mathrm{S}_{\mathrm{i}k\sqrt[4]{n}}[\psi_w](x)-\mathrm{S}_{k}[\varphi_v](x)-\mathrm{S}_{\mathrm{i}k}[\psi_v](x)=0,\quad x\in \partial D,
\label{equation1}
\end{eqnarray}
where $\mathrm{S}_{\tau}[\phi](x)$ denotes the single-layer operator with wave number $\tau$ applied to the function $\phi$ defined for $x\in \partial D$. Likewise, we use boundary condition (\ref{bc2}) along with the jump condition to obtain
\begin{eqnarray}
\mathrm{D}^\top_{k\sqrt[4]{n}}[\varphi_w](x)+\frac{1}{2}\varphi_w(x)+\mathrm{D}^\top_{\mathrm{i}k\sqrt[4]{n}}[\psi_w](x)+\frac{1}{2}\psi_w(x)-\mathrm{D}^\top_{k}[\varphi_v](x)\notag\\
-\frac{1}{2}\varphi_v(x)-\mathrm{D}^\top_{\mathrm{i}k}[\psi_v](x)-\frac{1}{2}\psi_v(x)=0,\quad x\in \partial D,
\label{equation2}
\end{eqnarray}
where $\mathrm{D}^\top_{\tau}[\phi](x)$ denotes the normal derivative of the single-layer operator with wave number $\tau$ applied to the function $\phi$ defined for $x\in \partial D$. Observe that we have 
\begin{eqnarray*}\Delta w&=&\Delta w_{pr}+\Delta w_{ev}=\Delta \mathrm{SL}_{k\sqrt[4]{n}}[\varphi_w]+\Delta \mathrm{SL}_{\mathrm{i}k\sqrt[4]{n}}[\psi_w]\\
&=&-k^2\sqrt{n}\mathrm{SL}_{k\sqrt[4]{n}}[\varphi_w]+k^2\sqrt{n}\mathrm{SL}_{\mathrm{i}k\sqrt[4]{n}}[\psi_w].
\end{eqnarray*}
Likewise, we obtain
\begin{eqnarray*}\Delta v=\Delta v_{pr}+\Delta v_{ev}=\Delta \mathrm{SL}_{k}[\varphi_v]+\Delta \mathrm{SL}_{\mathrm{i}k}[\psi_v]=-k^2\mathrm{SL}_{k}[\varphi_v]+k^2\mathrm{SL}_{\mathrm{i}k}[\psi_v].
\end{eqnarray*}
Using the boundary condition (\ref{bc3}) along with the jump conditions gives
\begin{eqnarray}
\sqrt{n}\mathrm{S}_{k\sqrt[4]{n}}[\varphi_w]
-\sqrt{n}\mathrm{S}_{\mathrm{i}k\sqrt[4]{n}}[\psi_w]-\mathrm{S}_{k}[\varphi_v]+\mathrm{S}_{\mathrm{i}k}[\psi_v]=0,
\label{equation3}
\end{eqnarray}
where we divide by $-k^2$. In a similar fashion, we can use the boundary condition (\ref{bc4}) and the jump condition to obtain
\begin{eqnarray*}
\sqrt{n}\mathrm{D}^\top_{k\sqrt[4]{n}}[\varphi_w](x)+\frac{\sqrt{n}}{2}\varphi_w(x)-\sqrt{n}\mathrm{D}^\top_{\mathrm{i}k\sqrt[4]{n}}[\psi_w](x)-\frac{\sqrt{n}}{2}\psi_w(x)-\mathrm{D}^\top_{k}[\varphi_v](x)\notag\\
-\frac{1}{2}\varphi_v(x)+\mathrm{D}^\top_{\mathrm{i}k}[\psi_v](x)+\frac{1}{2}\psi_v(x)=0,\quad x\in \partial D.
\label{equation4}
\end{eqnarray*}
The four boundary integral equations (\ref{equation1}), (\ref{equation2}), (\ref{equation3}), and (\ref{equation4}) can be written as the system
\begin{eqnarray*}
\begin{pmatrix}
\mathrm{S}_{k\sqrt[4]{n}} & \mathrm{S}_{\mathrm{i}k\sqrt[4]{n}} & -\mathrm{S}_{k} & -\mathrm{S}_{\mathrm{i}k}\\
\mathrm{D}^\top_{k\sqrt[4]{n}}+\frac{1}{2}I & \mathrm{D}^\top_{\mathrm{i}k\sqrt[4]{n}}+\frac{1}{2}I & -\mathrm{D}^\top_{k}
-\frac{1}{2}I & -\mathrm{D}^\top_{\mathrm{i}k}-\frac{1}{2}I\\
\sqrt{n}\mathrm{S}_{k\sqrt[4]{n}} & -\sqrt{n}\mathrm{S}_{\mathrm{i}k\sqrt[4]{n}} & -\mathrm{S}_{k} & \mathrm{S}_{\mathrm{i}k}\\
\sqrt{n}\mathrm{D}^\top_{k\sqrt[4]{n}}+\frac{\sqrt{n}}{2}I & -\sqrt{n}\mathrm{D}^\top_{\mathrm{i}k\sqrt[4]{n}}-\frac{\sqrt{n}}{2}I & -\mathrm{D}^\top_{k}
-\frac{1}{2}I & \mathrm{D}^\top_{\mathrm{i}k}+\frac{1}{2}I
\end{pmatrix}
\begin{pmatrix}
\varphi_w\\
\psi_w\\
\varphi_v\\
\psi_v
\end{pmatrix}=
\begin{pmatrix}
0\\
0\\
0\\
0
\end{pmatrix},
\end{eqnarray*}
where $I$ denotes the identity operator. After discretization with the boundary element collocation method (see \cite[Section 4.3]{kleefeldhot} for details) and the use of the nonlinear eigenvalue solver by Beyn \cite{beyn2012integral}, we are able to determine the transmission eigenvalues $k$ for a given constant $n$. 

\subsection{Numerical results}
We first use the separation of variables approach to compute the clamped transmission eigenvalues for a unit disk using the index of refraction $n=10$ and $n=100$. The first ten real-valued eigenvalues not counting multiplicities are reported to high accuracy in Table \ref{unitdisk} using various $p\in \mathbb{N}_0$.

\begin{table}[!ht]
\centering
\caption{\label{unitdisk}First ten real-valued transmission eigenvalues (not counting multiplicities) using the index of refraction $n=10$ and $n=100$ for a unit disk. In parenthesis are listed the corresponding Bessel indices $p\in \mathbb{N}_0$.}
\begin{tabular}{c|c|c}
   $k$  & $n=10$  & $n=100$\\
   \hline
   1. & 4.429820190009274 (1) & 1.889337846549858 (0)\\
   2. & 4.907103277141769 (0) & 2.270739046479443 (1)\\
   3. & 4.915126540867455 (2) & 2.648103084653914 (2)\\
   4. & 5.523429130292435 (3) & 3.019471264958882 (3)\\
   5. & 6.149185323266162 (4) & 3.085608474133166 (0)\\
   6. & 6.777920075835243 (5) & 3.373887214452198 (1)\\
   7. & 7.405777394866369 (6) & 3.385829288328941 (4)\\
   8. & 8.031556535451719 (7) & 3.748114844619391 (5)\\
   9. & 8.629637722213953 (0) & 3.767023646982871 (2)\\
   10.& 8.654922094217133 (8) & 4.107049944231099 (6)
\end{tabular}
\end{table}
Note that all eigenvalues with index $p\neq 0$ have multiplicity two. Additionally, we see the monotonicity with respect to $n>1$ for the first transmission eigenvalue, as shown theoretically in Theorem \ref{monotrans}. Interestingly, we observe monotonicity for all real-valued transmission eigenvalues for the unit disk. 
Note that we are also able to find purely complex-valued interior transmission eigenvalues, although the existence has not been proven yet. For example, for $n=10$ we obtain the eigenvalues $3.566132162943008\pm0.684076560127309\mathrm{i}$ (0), $5.318791306918498\pm 0.499795992339707\mathrm{i}$ (1), and $6.497129223785265\pm 0.748001142808220\mathrm{i}$ (2) and many more, which are closely situated towards the real axis. 
Additionally, we show some results for the case $0< n< 1$. We pick some selected indices of refraction such as $n\in \{1/128,1/64,1/32,1/16,1/8\}$ and compute the first four real-valued transmission eigenvalues for the unit disk. 

\begin{table}[!ht]
\centering
\caption{\label{secondcase}First four real-valued transmission eigenvalues (not counting multiplicities) using the index of refraction $n\in \{1/128,1/64,1/32,1/16,1/8\}$ for a unit disk. In parenthesis are listed the corresponding Bessel indices $p\in \mathbb{N}_0$.}
\begin{tabular}{l|c|c|c|c}
$n$ & 1. TE & 2. TE & 3. TE & 4. TE\\
\hline
1/128 & 5.9585 (0) &7.1723 (1) &8.3679 (2) &9.5433 (3)\\
1/64  & 6.0190 (0) &7.2031 (1) &8.3902 (2) &9.5617 (3)\\
1/32  & 6.1756 (0) &7.2717 (1) &8.4383 (2) &9.6005 (3)\\
1/16  & 7.4522 (1) &8.1199 (0) &8.5523 (2) &9.6890 (3)\\
1/8   & 9.7142 (1) &8.9211 (0) &8.9299 (2) &9.9369 (3)\\
\end{tabular}
\end{table}

As we can see in Table \ref{secondcase}, we observe the monotonicity with respect to $n>1$ for the first real-valued transmission eigenvalue as proven in Theorem \ref{monotrans}. Interestingly, we observe the same monotonicity behavior for the other eigenvalues as well.

Next, we compute the real-valued transmission eigenvalues with boundary integral equations using the boundary element collocation method with $120$ collocation points and $N=96$ quadrature points for Beyn's method to numerically approximate the two contour integrals. In Table \ref{elli}, we list the first five real-valued transmission eigenvalues using the refraction index $n=10$.

\begin{table}[!ht]
\centering
\caption{\label{elli}First five real-valued transmission eigenvalues (TE) including multiplicities for a unit disk and various ellipses with half-axis $a=1$ and $b\in \{0.95, 0.9, 0.85, 0.8\}$ using the index of refraction $n=10$.}
\begin{tabular}{l|c|c|c|c|c}
     $(1,b)$ & 1. TE &  2. TE & 3. TE & 4. TE & 5. TE\\
     \hline
     (1,1)   &4.4298&4.4298&4.9071&4.9151&4.9151\\
     (1,0.95)&4.4948&4.6107&4.9516&5.0502&5.1347\\
     (1,0.9) &4.5837&4.8233&5.0045&5.2124&5.3881\\
     (1,0.85)&4.7051&5.0717&5.0741&5.4073&5.6757\\
     (1,0.8) &4.8701&5.1669&5.3603&5.6408&6.0025
\end{tabular}
\end{table}
As we observe in the first row of Table \ref{elli}, we obtain a five-digit accuracy for the unit disk compared to Table \ref{unitdisk}. In addition, we list the transmission eigenvalues for various ellipses with half axes $a=1$ and $b\in \{0.95, 0.9, 0.85, 0.8\}$. We observe a nice monotonicity behavior with respect to the parameter $b$ for all the listed TEs. The smaller $b$, the smaller the area of the ellipse and the larger the TEs.

Finally, we also list five real-valued transmission eigenvalues for the deformed ellipse given parametrically by 
$$\big(0.75\cdotp \cos(t)+\epsilon\cdotp \cos(2\cdotp t),\sin(t)\big)^\top \quad t\in [0,2\pi]$$ 
using $\epsilon \in \{0.1,0.2,0.3\}$. The results are reported in Table \ref{more}.
\begin{table}[!ht]
\centering
\caption{\label{more}First five real-valued transmission eigenvalues (TE) for the deformed ellipses with $\epsilon\in \{0.1, 0.2, 0.3\}$ using the index of refraction $n=10$.}
\begin{tabular}{l|c|c|c|c|c}
     $\epsilon$ & 1. TE &  2. TE & 3. TE & 4. TE & 5. TE\\
     \hline
     0.1   &5.1758&5.3428&5.7179&5.9858&6.3583\\
     0.2   &5.3462&5.5014&5.7695&6.1152&6.3883\\
     0.3   &5.5495&5.7166&5.8588&6.2672&6.4687
\end{tabular}
\end{table}
As we observe, we obtain a nice monotonicity with respect to the parameter $\epsilon$. Precisely, the larger $\epsilon$, the larger the TEs. However, the area of the deformed ellipse remains constant.

\section{Summary and outlook}\label{summary}
In this paper, we analyzed the transmission eigenvalue problem for biharmonic scattering with a penetrable obstacle. Starting from the direct biharmonic scattering formulation, we derived the corresponding biharmonic eigenvalue problem and established its fundamental spectral properties. In particular, we proved the discreteness of the spectrum, the existence of infinitely many real transmission eigenvalues, and the monotonic dependence of the first eigenvalue on the refractive index. These results extend the well-studied acoustic theory to the biharmonic setting, which naturally arises in Kirchhoff–Love plate models and other elastic structures. Numerical experiments, based on separation of variables and boundary integral equation methods, confirmed the theoretical results and illustrated the monotonicity for various geometries.

This work highlights the role of biharmonic transmission eigenvalues as a valuable spectral tool for inverse problems in elasticity and related applications. Now that a rigorous transmission eigenvalue problem for biharmonic scattering has been established, it would be natural to explore qualitative reconstruction methods such as the factorization method, the linear sampling method, or the direct sampling method in this setting. \\

\noindent{\bf Acknowledgments:} The research of I. Harris is partially supported by the NSF DMS Grant 2208256 and 2509722.\\

\noindent{\bf Disclosure:} No potential conflict of interest was reported by the author(s).


\end{document}